\documentclass[12pt,reqno]{amsart}
\usepackage[british]{babel}

\usepackage{graphicx}
\usepackage{amsmath, amssymb, mathtools,cite} 
\usepackage{geometry, url, flafter}
\usepackage{microtype}
\newcommand{\pqbinom}[3]{\binom{#1}{#2}_{\! #3}}

\newcommand{\ud}{\,\mathrm{d}}
\DeclareMathOperator{\Real}{Re}
\DeclareMathOperator{\Imag}{Im}

\DeclareMathOperator{\Li}{Li}

\DeclareMathOperator{\csch}{csch}

\newtheorem{theorem}{Theorem}[section]
\newtheorem{proposition}[theorem]{Proposition}
\newtheorem{lemma}[theorem]{Lemma}
\newtheorem{corollary}[theorem]{Corollary}
\theoremstyle{definition}
\newtheorem{definition}[theorem]{Definition}

\newtheorem*{theoremA}{Theorem A}
\newtheorem*{theoremB}{Theorem B}

\theoremstyle{remark}
\newtheorem*{remark}{Remark}
\numberwithin{equation}{section}

\title{On $(p,q)$-binomial coefficient ratios for complex parameters}

\begin{document}
\author{P. {\AA}hag} \address{Department
  of Mathematics and Mathematical Statistics, Ume{\aa} University,
  SE-901 87 Ume{\aa}, Sweden}
 \email{per.ahag@umu.se}

\author{R. Czy{\.z}}\address{Faculty of Mathematics and Computer
  Science, Jagiellonian University, \L ojasiewicza 6, 30-348 Krak\'ow,
  Poland}\email{rafal.czyz@im.uj.edu.pl}

\author{P. H. Lundow}
\address{Department of Mathematics and Mathematical Statistics,
  Ume{\aa} University, SE-901 87 Ume{\aa}, Sweden}
\email{per-hakan.lundow@umu.se}

\begin{abstract}
  We prove local asymptotics for near-central complex $(p,q)$-binomial
  coefficient moduli ratios allowing an imaginary parameter
  perturbation of order $n^{-3/4}$ at a $\sqrt{n}$ length scale from
  the centre. Moreover, we obtain ratio asymptotics for a smaller imaginary
perturbation of order $n^{-5/4}$ at the length scale $n^{3/4}$. These results were obtained by reducing the
  two-parameter coefficients to just one parameter, giving a
  branch-free logarithmic representation of the second-order ratio
  and, hence, uniform complex curvature asymptotes for near-central
  ratios. 
\end{abstract}

\subjclass[2020]{Primary 05A30; Secondary 33D45, 41A60, 05A16}
\keywords{$(p,q)$-binomial coefficients; Gaussian polynomials;
Rogers--Szeg\H{o} polynomials; complex asymptotics;
near-central coefficient ratios; discrete curvature; quartic scaling}

\maketitle

\section{Introduction}\label{sec:intro}

For integers $0\le k\le n$ and complex parameters $p,q$ with
$p\ne0$ and
\[
p^j\ne q^j\qquad (1\le j\le n),
\]
the $(p,q)$-binomial coefficients are defined by
\begin{equation}\label{eq:intro-pq-def}
\pqbinom{n}{k}{p,q}
:=
\prod_{j=1}^{k}\frac{p^{\,n-k+j}-q^{\,n-k+j}}{p^j-q^j}.
\end{equation}
The empty product is one.  The non-vanishing condition is used only to
make \eqref{eq:intro-pq-def} literal.  It is automatic for the
asymptotic choices considered below.  For $p=1$, one obtains the usual
$q$-binomial coefficients, or Gaussian polynomials, after the standard
polynomial cancellation \cite{GasperRahman2004,DLMF17,StanleyEC1}.  If
$q$ is a prime power, then $\pqbinom{n}{k}{q}$ counts the
$k$-dimensional subspaces of $\mathbb F_q^n$ \cite{StanleyEC1}.
Equivalently, the coefficient of $q^\ell$ in $\pqbinom{n}{k}{q}$ counts
the $k$-element subsets of $\{1,\ldots,n\}$ with element sum
$\binom{k+1}{2}+\ell$.  The unimodality and strict unimodality of
these coefficient sequences have been studied from several different
viewpoints \cite{Dhand2014,OHara1990,PakPanova2013}.  The
$(p,q)$-binomial coefficient is the homogeneous symmetric form of the
same polynomial.

The polynomial
\[
P_n(w;p,q):=\sum_{k=0}^{n}\pqbinom{n}{k}{p,q}w^k
\]
is the $(p,q)$-Rogers--Szeg\H{o} polynomial. Its one-parameter form is
\begin{equation}\label{eq:intro-RS-polynomial}
P_n(w;q):=P_n(w;1,q)=\sum_{k=0}^{n}\pqbinom{n}{k}{q}w^k.
\end{equation}

Szeg\H{o}'s 1926 note~\cite{Szego1926} is a classical source for the
theta-function interpretation. He studies an orthogonalisation problem on the
unit circle with an elliptic theta-function weight of the form
\begin{equation}\label{eq:intro-szego-theta-weight}
f_\eta(\theta)=1+2\sum_{m=1}^{\infty}\eta^{m^2}\cos(m\theta),
\qquad 0<\eta<1,
\end{equation}
and shows that the resulting orthogonal polynomials are simply related to
finite polynomials whose coefficients are $q$-binomial coefficients. He also
connects these finite polynomials with earlier trigonometric identities of
Rogers \cite[pp.~243--244]{Szego1926}. The term
``Rogers--Szeg\H{o} polynomial'' is used here in this sense: it refers to the
connection among $q$-binomial coefficients, theta functions, and
orthogonality on the unit circle. Atakishiyev and Nagiyev later studied an
orthogonality relation on the full real line and related the resulting
Rogers--Szeg\H{o} functions to Stieltjes--Wigert functions by the ordinary
Fourier transform \cite{AtakishiyevNagiyev1994}.

Lubinsky and Saff~\cite{LubinskySaff1987} used Rogers--Szeg\H{o} polynomials
to analyse Pad\'e approximants of partial theta functions and the zero
distribution of these polynomials on the unit circle. Their work is global in
the variable $z$ and assumes fixed $|q|=1$. Here $q/p$ depends on $n$,
approaches $1$, and the estimates concern coefficients near the centre.

Corcino~\cite{Corcino2008} studied algebraic properties of
$(p,q)$-binomial coefficients, including recurrences, generating functions,
inverse relations, and orthogonality relations. Jagannathan and Sridhar gave
the corresponding $(p,q)$-Rogers--Szeg\H{o} polynomial and its oscillator
interpretation \cite{JagannathanSridhar2010}.

Positive $(p,q)$-binomial distributions were later used by Lundow and
Rosengren~\cite{LundowRosengren2010,LundowRosengren2013}, after determining
asymptotic properties of the coefficients, to approximate Ising-model
magnetisation distributions. In the mean-field case this distribution is
obtained exactly in the limit $q/p\to1$, with a simple relation between the
temperature and $p,q$. Moreover, $(p,q)$-binomial distributions approximate
the magnetisation distribution of high-dimensional systems ($d\ge5$) very
accurately, with the correct limiting shapes, although the relation between
temperature and the $(p,q)$-parameters is less clear.

The positive-parameter unimodal or bimodal structure was proved by Su and
Wang \cite{SuWang2012}. The real transition function relating $p,q$ to the
locations of the distribution maxima was studied by the present authors in
\cite{AhagCzyzLundow2024}, and complex branches of the associated
generalised Lambert $W$-type function were studied in
\cite{AhagCzyzLundow2025}. These papers lead directly to the present complex
coefficient problem.

The present paper studies local complex asymptotics of the coefficients. The
Ising-model and transition-function references indicate the origin of the
problem but are not used in the proofs. The argument first separates the two
parameters, then represents the second-order ratio by an absolutely convergent
logarithmic series that does not require choosing a logarithm branch, estimates
the complex curvature, and finally sums the resulting second-order differences. The
introduction states only the two principal coefficient-ratio results.

Set
\[
r:=\frac qp.
\]
Then, by Lemma~\ref{lem:pq-factor}, for every $0\le k\le n$,
\[
\pqbinom{n}{k}{p,q}=p^{\,k(n-k)}\pqbinom{n}{k}{r}.
\]
Thus every local question splits into the explicit quadratic prefactor
$p^{k(n-k)}$ and the one-parameter coefficients
\begin{equation}\label{eq:intro-r-binomial}
\pqbinom{n}{k}{r}
:=\prod_{j=1}^{k}\frac{1-r^{\,n-k+j}}{1-r^j}.
\end{equation}
For $0\le k\le n-1$ put
\begin{equation}\label{eq:intro-rho}
\rho_k(r):=
\frac{\pqbinom{n}{k+1}{r}}{\pqbinom{n}{k}{r}}
=\frac{1-r^{n-k}}{1-r^{k+1}},
\end{equation}
and for $1\le k\le n-1$ put
\begin{equation}\label{eq:intro-R}
R(n,k;r):=
\frac{\pqbinom{n}{k-1}{r}\,\pqbinom{n}{k+1}{r}}
     {\pqbinom{n}{k}{r}^{2}}
=\frac{\rho_k(r)}{\rho_{k-1}(r)}
=\frac{(1-r^{n-k})(1-r^k)}{(1-r^{n-k+1})(1-r^{k+1})}.
\end{equation}
For the full $(p,q)$-coefficients we use the corresponding notation
\begin{equation}\label{eq:intro-full-ratios}
\rho_k(p,q):=\frac{\pqbinom{n}{k+1}{p,q}}{\pqbinom{n}{k}{p,q}},
\qquad
R(n,k;p,q):=\frac{\rho_k(p,q)}{\rho_{k-1}(p,q)}.
\end{equation}
The reduction gives
\begin{equation}\label{eq:intro-ratio-reduction}
\rho_k(p,q)=p^{\,n-2k-1}\rho_k(r),
\qquad
R(n,k;p,q)=p^{-2}R(n,k;r).
\end{equation}
Since complex coefficients have no natural order, we work with the modulus and
its discrete second-order difference.  Define
\begin{equation}\label{eq:intro-g}
g(k):=\log\left|p^{\,k(n-k)}\pqbinom{n}{k}{r}\right|
=\log\left|\pqbinom{n}{k}{p,q}\right|,
\qquad 0\le k\le n.
\end{equation}
Then, for $1\le k\le n-1$,
\begin{equation}\label{eq:intro-second-diff}
\Delta^2g(k):=g(k+1)-2g(k)+g(k-1)=\log|R(n,k;r)|-2\log|p|.
\end{equation}
If $n$ is even, $\kappa:=n/2$, and $0\le\ell\le\kappa$, then
\begin{equation}\label{eq:intro-middle-product}
\frac{\pqbinom{n}{\kappa+\ell}{r}}{\pqbinom{n}{\kappa}{r}}
=
\prod_{j=0}^{\ell-1}\rho_{\kappa+j}(r),
\end{equation}
with negative $\ell$ handled by the coefficient symmetry.

For the main complex estimates we use $n$-dependent parameters of the form
\begin{equation}\label{eq:intro-parameters}
p=\exp\!\left(-\frac{u}{n}-i\,\frac{\beta}{n^{3/4}}\right),
\qquad
q=\exp\!\left(-\frac{z}{n}-i\,\frac{\alpha}{n^{3/4}}\right),
\end{equation}
with fixed real parameters $u,z,\alpha,\beta$ satisfying $z-u>0$.  Hence
\begin{equation}\label{eq:intro-r-exponent}
r=\frac qp=\exp(-t),
\qquad
t=\frac{v}{n}+i\,\frac{\delta}{n^{3/4}},
\qquad
v:=z-u>0,
\qquad
\delta:=\alpha-\beta.
\end{equation}

\begin{definition}\label{def:admissible-set}
Fix constants
\[
0<v_{\min}\le v_{\max}<\infty,
\qquad
0\le \delta_{\max}<\infty.
\]
The admissible set for the exponent parameters $(v,\delta)$ is
\[
\mathcal D:=\{(v,\delta)\in\mathbb R^2:
v_{\min}\le v\le v_{\max},\ |\delta|\le \delta_{\max}\}.
\]
\end{definition}
All constants implicit in $O_{\mathcal D,L}(\cdot)$ are independent of $n$ and of
the chosen $(v,\delta)\in\mathcal D$.  For $z>0$ write
\begin{equation}\label{eq:intro-AB}
A(z):=z\left(\coth\frac z4-1\right),
\qquad
B(z):=\frac{z^3\sinh(z/2)}{8\sinh^4(z/4)}.
\end{equation}
Then $B(z)>0$.

The exponent $3/4$ has two roles.  On the positive real axis it is the
normalisation at which the quadratic term can disappear and a quartic term
remains.  Indeed, at $2u=A(z)$ the leading real expansion is quartic for
$\ell=O(n^{3/4})$.  In the complex problem it is also the normalisation for which the
argument of $e^{\ell t}$ has order one: if
$\Imag t$ is of order $n^{-3/4}$ and $\ell\sim x n^{3/4}$, then
$\ell\Imag t$ is of order one.

More generally, write abstractly
\[
t=\frac{v_0}{n^b}+i\frac{\delta_0}{n^a},
\qquad v_0>0,
\]
and take a coefficient range $\ell=xn^\sigma$.  The imaginary part first
appears through
\[
\ell\Imag t=\delta_0 x n^{\sigma-a}.
\]
Thus, for the quartic normalisation $\sigma=3/4$, an imaginary part of order $n^{-a}$ has a
finite non-zero limit only for $a=3/4$; it disappears from the leading term
when $a>3/4$; and for $a<3/4$ it has no fixed pointwise limit without
additional arithmetic or subsequence restrictions.  For $b=1$, $n t$ has a
non-zero finite limit and the functions $A$ and $B$ above govern the central
expansion.  If $b>1$, the radial part tends more rapidly to the ordinary
binomial case; if $b<1$, $n t$ diverges and the fixed-$A,B$ expansion does not
apply.  Theorem A treats $b=1$, $a=3/4$ for deviations of order $\sqrt n$.
Theorem B treats the imaginary part of order $n^{-5/4}$, namely
$b=1$, $a=5/4$, at the quartic normalisation.

\begin{theoremA}
Assume that $n$ is even and write $\kappa:=n/2$. Let $p,q,r,t$ be
given by \eqref{eq:intro-parameters}--\eqref{eq:intro-r-exponent} for fixed real
$u,z,\alpha,\beta$ with $v=z-u>0$. Put
\[
\tau:=v+i\delta n^{1/4},
\qquad
w:=\frac{\tau}{2},
\]
and define
\[
F(w):=\frac{1}{e^w-1},
\qquad
G(w):=\frac{e^w}{(e^w-1)^2},
\qquad
H(w):=\frac{e^w(e^w+1)}{(e^w-1)^3}.
\]
For every fixed $L>0$, uniformly for all integers $\ell$ with
$|\ell|\le L\sqrt n$, and with $x:=\ell/\sqrt n$,
\begin{align}
\log\left|
\frac{\pqbinom{n}{\kappa+\ell}{p,q}}
     {\pqbinom{n}{\kappa}{p,q}}
\right|
&=
x^2\left(u-C_n(v,\delta)\right)
+\frac{x^2}{2n}\Real\left(\tau^2G(w)\right)
-\frac{x^4}{12n}\Real\left(\tau^3H(w)\right)
\nonumber\\
&\quad+O_{v,\delta,L}(\mathcal E_n),
\label{eq:main-sqrt-pq}\\[1mm]
\log\left|
\frac{\pqbinom{n}{\kappa+\ell}{r}}{\pqbinom{n}{\kappa}{r}}
\right|
&=
-x^2 C_n(v,\delta)
+\frac{x^2}{2n}\Real\left(\tau^2G(w)\right)
-\frac{x^4}{12n}\Real\left(\tau^3H(w)\right)
\nonumber\\
&\quad+O_{v,\delta,L}(\mathcal E_n),
\label{eq:main-sqrt-r}
\end{align}
where
\[
\mathcal E_n:=n^3|t|^5+n^2|t|^4+n|t|^3
\]
and
\begin{equation}\label{eq:main-Cn}
C_n(v,\delta)
:=\Real\left(\tau F(w)\right)
=\Real\left(
\frac{v+i\delta n^{1/4}}
{e^{v/2+i\delta n^{1/4}/2}-1}
\right).
\end{equation}
If $\delta=0$, then $C_n(v,0)=A(v)/2$. If $\delta\ne0$ and
$\vartheta:=\delta n^{1/4}/2$, then
\begin{equation}\label{eq:main-Cn-explicit}
C_n(v,\delta)=
\frac{v(e^{v/2}\cos\vartheta-1)
+\delta n^{1/4}e^{v/2}\sin\vartheta}
{e^{v}-2e^{v/2}\cos\vartheta+1}.
\end{equation}
\end{theoremA}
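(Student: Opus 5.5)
The plan is to work with the one-parameter ratio \eqref{eq:main-sqrt-r} first and then read off \eqref{eq:main-sqrt-pq} from Lemma~\ref{lem:pq-factor}. Since $k(n-k)=\kappa^2-\ell^2$ for $k=\kappa+\ell$ and $\log|p|=-u/n$, we get $\log|p^{k(n-k)}/p^{\kappa^2}|=-\ell^2\log|p|=x^2u$. This accounts exactly for the extra term $x^2u$. By the symmetry $\pqbinom{n}{\kappa-\ell}{r}=\pqbinom{n}{\kappa+\ell}{r}$ it suffices to take $0\le\ell\le L\sqrt n$. Then \eqref{eq:intro-middle-product} gives
\[
\log\left|\frac{\pqbinom{n}{\kappa+\ell}{r}}{\pqbinom{n}{\kappa}{r}}\right|
=\sum_{j=0}^{\ell-1}\Real\bigl(\psi(w-tj)-\psi(w+t(j+1))\bigr),
\qquad \psi(s):=\log(1-e^{-s})=-\sum_{m\ge1}\frac{e^{-ms}}{m}.
\]
Here we used $t\kappa=\tau/2=w$ and $\rho_{\kappa+j}(r)=(1-e^{-t(\kappa-j)})/(1-e^{-t(\kappa+j+1)})$. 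The series for $\psi$ converges absolutely whenever $\Real s>0$, so no branch choice is needed. This holds on the relevant region, because $\Real(w\pm h)=v/2+O(|t|\sqrt n)=v/2+O(n^{-1/4})$ for $|h|\le |t|(L\sqrt n+1)$.

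Next, Taylor-expand $\psi$ about $w$. One checks $\psi'=F$, $\psi''=-G$ and $\psi'''=H$. On the strip $\Real s\ge v/4$ every derivative $\psi^{(m)}$ is bounded uniformly, with a bound depending only on $m$ and $v$. The reason is that $|e^{s}-1|\ge e^{v/4}-1$ there, and the large imaginary part $\delta n^{1/4}/2$ of $w$ is harmless because only $e^{s}$ enters. Put $h_1=-tj$ and $h_2=t(j+1)$, and expand to fourth order with a fifth-order remainder. Summing over $j$ uses the identities
\[
\sum_{j<\ell}(2j+1)=\ell^2,\qquad \sum_{j<\ell}\bigl((j+1)^2-j^2\bigr)=\ell^2,\qquad \sum_{j<\ell}\bigl(j^3+(j+1)^3\bigr)=\frac{\ell^4}{2}+\frac{\ell^2}{2},\qquad \sum_{j<\ell}\bigl((j+1)^4-j^4\bigr)=\ell^4 .
\]
These give
\[
-t\ell^2F(w)+\frac{t^2\ell^2}{2}G(w)-\frac{t^3\ell^4}{12}H(w)+O(|t|^3\ell^2)+O(|t|^4\ell^4)+O(|t|^5\ell^6).
\]
Substituting $t=\tau/n$ and $\ell^2=x^2n$ turns the three main terms into $-x^2\tau F(w)+\frac{x^2}{2n}\tau^2G(w)-\frac{x^4}{12n}\tau^3H(w)$. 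Taking real parts yields \eqref{eq:main-sqrt-r} with $C_n=\Real(\tau F(w))$. The three error terms are $O(n|t|^3)$, $O(n^2|t|^4)$ and $O(n^3|t|^5)$, which together make up exactly $\mathcal E_n$.

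The step I expect to be the main obstacle is the bookkeeping of the error terms. A naive third-order remainder contributes $\sum_j |t|^4j^4\asymp |t|^4\ell^5\sim n^{-1/2}$, which is larger than $\mathcal E_n$. So the fourth-order term must be written out explicitly. Its $j$-sum telescopes to $-t^4\ell^4\psi^{(4)}(w)/24=O(n^2|t|^4)$, and only then does the fifth-order remainder give the sharp $n^3|t|^5$. It also has to be checked that all the derivative bounds are uniform in $n$ despite $\Imag w\to\infty$; this rests on the lower bound for $|e^s-1|$ above. Finally, the explicit formula \eqref{eq:main-Cn-explicit} follows by multiplying $\tau/(e^{w}-1)$ by the conjugate $e^{\bar w}-1$. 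This gives $|e^w-1|^2=e^v-2e^{v/2}\cos\vartheta+1$ and numerator real part $v(e^{v/2}\cos\vartheta-1)+\delta n^{1/4}e^{v/2}\sin\vartheta$. For $\delta=0$ it reduces to $v/(e^{v/2}-1)=A(v)/2$, using $\coth(v/4)-1=2/(e^{v/2}-1)$.
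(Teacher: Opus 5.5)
Your proof is correct, but it takes a different route from the paper's. The paper obtains Theorem A from Theorem~\ref{thm:complex-sqrt}, which works at the level of the second-order ratio. Proposition~\ref{prop:complex-local}, which expands $1-e^{-mt}$ inside the series $\Lambda$, gives $\log|R(n,\kappa+j;r)|=-\Real[t(F(w+jt)+F(w-jt))]+\frac12\Real[t^2(G(w+jt)+G(w-jt))]+O(|t|^3)$. The paper then expands $F,G$ about $w$, where the odd-order terms cancel by the symmetry $j\mapsto -j$, and sums with the weights $\ell-|j|$ of the Green identity (Lemma~\ref{lem:green}). You instead stay with adjacent ratios. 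You write the centred log-ratio exactly as $\sum_{j<\ell}\Real(\psi(w-tj)-\psi(w+t(j+1)))$ with the single analytic function $\psi(s)=\log(1-e^{-s})$. Taylor's theorem is then your only approximation, and neither Proposition~\ref{prop:complex-local} nor Lemma~\ref{lem:green} is needed. The cost is that the parity cancellation is no longer automatic. As you correctly observe, you must carry the fourth-order term and use the telescoping sum $\sum_{j<\ell}((j+1)^4-j^4)=\ell^4$, because a bare third-order remainder leaves an $O(|t|^4\ell^5)=O(n^{-1/2})$ error, which exceeds $\mathcal E_n$. A side benefit of your route is that it is exact at order $t^3$. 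You get $-\frac{t^3}{12}(\ell^4+\ell^2)H(w)$, whereas \eqref{eq:complex-sqrt} has $-\frac{t^3}{12}\ell^2(\ell^2-1)H(w)$. The discrepancy arises because the per-step $O(|t|^3)$ remainder in Proposition~\ref{prop:complex-local} contains an untracked term $-\frac{t^3}{3}H(w)$. The difference $\frac{t^3\ell^2}{6}H(w)$ is $O(n|t|^3)$ and is absorbed into $\mathcal E_n$, so both versions yield the theorem as stated. The paper's route, in turn, follows a single template that it also uses for Theorem~B: a pointwise estimate for the discrete curvature $\Delta^2 g$, followed by Lemma~\ref{lem:green}.
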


\begin{remark}
The constant $L$ in Theorem A is fixed as $n\to\infty$. Thus the theorem is
uniform for bounded $x=\ell/\sqrt n$ and gives a local expansion about the
central coefficient. It is not intended to approximate the entire coefficient
array or to locate maxima that leave this range as $n$ increases. The numerical
illustrations below show both the accuracy near the centre and the deterioration
farther away from it.
\end{remark}

\begin{theoremB}
Let $z>0$, let $\alpha,\gamma\in\mathbb R$, and assume that $n$ is even. Put
$\kappa:=n/2$ and
\[
t:=\frac zn+i\frac{\alpha}{n^{5/4}},
\qquad
r:=e^{-t},
\qquad
u:=\frac{A(z)}2+\frac{\gamma}{\sqrt n},
\]
\[
p:=e^{-u/n},
\qquad
q:=p \,r.
\]
For every fixed $L>0$, uniformly for all integer sequences $\ell_n$ such that
$x_n:=\ell_n/n^{3/4}$ satisfies $|x_n|\le L$,
\begin{equation}\label{eq:intro-small-imaginary-perturbation}
\left|
\frac{\pqbinom{n}{\kappa+\ell_n}{p,q}}
     {\pqbinom{n}{\kappa}{p,q}}
\right|
=
\exp\!\left(
\left(\gamma+\frac{\alpha^2}{4}A''(z)\right)x_n^2
-\frac{B(z)}{12}x_n^4
+O_{z,\alpha,\gamma,L}(n^{-1/2})
\right).
\end{equation}
The same formula holds with $\kappa-\ell_n$ in place of
$\kappa+\ell_n$. In particular, if $x_n\to x$, then
\[
\left|
\frac{\pqbinom{n}{\kappa\pm\ell_n}{p,q}}
     {\pqbinom{n}{\kappa}{p,q}}
\right|
\longrightarrow
\exp\!\left(
\left(\gamma+\frac{\alpha^2}{4}A''(z)\right)x^2
-\frac{B(z)}{12}x^4
\right).
\]
For $\gamma=0$, the imaginary perturbation contributes the quadratic term
$\alpha^2A''(z)x^2/4$ at the real critical value $2u=A(z)$.
\end{theoremB}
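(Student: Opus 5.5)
The plan is to separate off the parameter $p$ exactly, and then Taylor-expand the one-parameter product \eqref{eq:intro-middle-product} around the central point $nt/2$. Two preliminary remarks fix the setup.

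\emph{Symmetry.} By Lemma~\ref{lem:pq-factor} we have $\pqbinom{n}{k}{p,q}=p^{k(n-k)}\pqbinom{n}{k}{r}$. The right-hand side is invariant under $k\mapsto n-k$, so the case $\kappa-\ell_n$ follows from the case $\kappa+\ell_n$. We may therefore take $\ell=\ell_n\ge0$.

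\emph{Reduction to $r$.} Since $(\kappa+\ell)(\kappa-\ell)=\kappa^2-\ell^2$ and $|p|=e^{-u/n}$, the prefactor contributes
\[
\log|p^{-\ell^2}|=\frac{\ell^2u}{n}=x_n^2\sqrt n\Bigl(\frac{A(z)}2+\frac{\gamma}{\sqrt n}\Bigr)=\frac{A(z)}{2}\sqrt n\,x_n^2+\gamma x_n^2 .
\]
So it suffices to prove
\[
\log\left|\frac{\pqbinom{n}{\kappa+\ell}{r}}{\pqbinom{n}{\kappa}{r}}\right|=-\frac{A(z)}2\sqrt n\,x_n^2+\frac{\alpha^2}{4}A''(z)x_n^2-\frac{B(z)}{12}x_n^4+O(n^{-1/2}).
\]

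\emph{Expansion of the product.} Put $\varphi(s):=\log(1-e^{-s})$, which is analytic (principal branch) in a neighbourhood of the real point $z/2$. Let $w:=nt/2=z/2+i\alpha n^{-1/4}/2$. Then
\[
\log\rho_{\kappa+j}(r)=\varphi(w-jt)-\varphi\bigl(w+(j+1)t\bigr).
\]
Summing over $0\le j\le\ell-1$ and using \eqref{eq:intro-middle-product} gives a representation that needs no branch choice, because all arguments stay in a small disc about $z/2$: indeed $|jt|\le L n^{3/4}\cdot O(1/n)=O(n^{-1/4})$. Taylor's theorem at $w$ then yields
\[
\sum_{j=0}^{\ell-1}\log\rho_{\kappa+j}=-\sum_{m\ge1}\frac{\varphi^{(m)}(w)}{m!}\,t^m S_m(\ell),
\qquad
S_m(\ell)=\sum_{j=0}^{\ell-1}\bigl((j+1)^m-(-j)^m\bigr),
\]
with a remainder controlled by Cauchy estimates on a fixed disc. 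The first few sums are $S_1=\ell^2$, $S_2=\ell^2$ and $S_3=\ell^4/2+O(\ell^2)$; in general $S_m=O(\ell^{m+1})$ for odd $m$ and $S_m=O(\ell^{m})$ for even $m$. Since $t=O(1/n)$ and $\ell=O(n^{3/4})$, the sizes of the terms are
\[
t^2S_2=O(n^{-1/2}),\quad t^4S_4=O(n^{-1}),\quad t^5S_5=O(n^{-1/2}),\quad t^3\cdot O(\ell^2)=O(n^{-3/2}),
\]
and higher orders are smaller still. Only the $m=1$ and $m=3$ terms therefore survive.

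\emph{The term $m=3$.} Here $\varphi'''=F''$ with $F(s)=1/(e^s-1)$, and $t^3\ell^4=z^3x_n^4+O(n^{-1/4})$. The contribution is $-\tfrac1{12}F''(w)z^3x_n^4$. Replacing $w$ by $z/2$ costs only $O(n^{-1/4})\cdot x_n^4$ in general. However, the first-order correction is purely imaginary, namely $F'''(z/2)\cdot i\alpha n^{-1/4}/2$ times a real quantity, so after taking the real part the error is $O(n^{-1/2})$. It remains to check the identity $z^3F''(z/2)=B(z)$, which is an elementary computation with $\sinh(z/4)$.

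\emph{The term $m=1$ (main obstacle).} This is where care is needed, because of the large factor:
\[
t\ell^2=x_n^2\bigl(z\sqrt n+i\alpha n^{1/4}\bigr).
\]
I would expand $F(w)$ about $z/2$ in the small quantity $h=i\alpha n^{-1/4}/2$ up to fourth order, and then take the real part of $-x_n^2(z\sqrt n+i\alpha n^{1/4})F(w)$.
\begin{itemize}
\item The order-zero term gives $-z\sqrt n F(z/2)x_n^2=-\tfrac{A(z)}2\sqrt n\,x_n^2$, since $A(z)=2zF(z/2)$.
\item The cross term $i\alpha n^{1/4}\cdot F'h$ gives $+\tfrac{\alpha^2}{2}F'(z/2)x_n^2$.
\item The term $z\sqrt n\cdot F''h^2/2$ gives $+\tfrac{z\alpha^2}{8}F''(z/2)x_n^2$.
\item All remaining terms of order $\ge n^{-1/4}$ are purely imaginary, by parity in $i\alpha$, so the real error is $O(n^{-1/2})$.
\end{itemize}
Differentiating $A(z)=2zF(z/2)$ twice gives $A''(z)=2F'(z/2)+\tfrac z2F''(z/2)$. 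Hence the two $\alpha^2$ terms combine to exactly $\tfrac{\alpha^2}{4}A''(z)x_n^2$. All error constants depend only on $z,\alpha,\gamma,L$, which gives the required uniformity. The limit statement for $x_n\to x$ is then immediate.
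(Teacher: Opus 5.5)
Your proof is correct, but it takes a different route from the paper. The paper never expands the first-order ratios. It first proves a curvature estimate, uniformly for $|j|\le Ln^{3/4}$,
\[
\log\left|R(n,\kappa+j;e^{-t})\right| = -\frac{\Real\mathcal A(\tau)}{n}-\frac{\Real\mathcal B(\tau)}{n^3}\,j^2+O(n^{-2}),
\]
where $\tau=nt$, $\mathcal A(\tau)=\tau(\coth(\tau/4)-1)$ and $\mathcal B(\tau)=\tau^3\sinh(\tau/2)/(8\sinh^4(\tau/4))$. This uses the hyperbolic form of $R$ and a Taylor expansion of $\log\sinh$ in $h=\tau/(2n)$ and $\Delta=\tau j/(2n)$. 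The paper then adds $2u/n$ and sums the second differences with the Green identity (Lemma~\ref{lem:green}), so the $O(n^{-2})$ error becomes $O(\ell^2/n^2)=O(n^{-1/2})$. The $\alpha^2$-term comes from a single observation: $\mathcal A$ is analytic and real on the real axis, so $\Real\mathcal A(z+i\alpha n^{-1/4})=A(z)-\frac{\alpha^2}{2\sqrt n}A''(z)+O(n^{-1})$; likewise $\Real\mathcal B(\tau)=B(z)+O(n^{-1/2})$.

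You instead sum $\log\rho_{\kappa+j}$ directly, via $\varphi(s)=\log(1-e^{-s})$ and the exact power sums $S_m(\ell)$. This bypasses $R$ and the Green identity entirely. It also removes any branch question, since the principal branch of $\varphi$ is analytic on all of $\Real s>0$. And it shows explicitly which two cross terms produce $\frac{\alpha^2}{4}A''(z)$. The price is term-by-term bookkeeping and control of the whole Taylor series. You handle this correctly: the exact value $S_4=\ell^4$ is genuinely needed, since the crude bound $\ell^5|t|^4$ is only $O(n^{-1/4})$.

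Your terms are in fact the paper's functions in disguise. Since $A(z)=2zF(z/2)$ and $z^3F''(z/2)=B(z)$ hold as identities between analytic functions, you have exactly
\[
t\ell^2F(w)=\tfrac12\sqrt n\,x_n^2\,\mathcal A(\tau),
\qquad
t^3\ell^4F''(w)=x_n^4\,\mathcal B(\tau).
\]
So your $m=1$ computation collapses to the one-line parity argument above.

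One step in your write-up is slightly loose: the $m=3$ term. You replace $t^3\ell^4$ by $z^3x_n^4$ and then discuss only the error from moving $w$ to $z/2$. But the dropped piece $3i\alpha z^2n^{-1/4}x_n^4$ is of order $n^{-1/4}$, which alone would exceed the claimed error. It is harmless only because it is purely imaginary and $F''(w)$ is real up to $O(n^{-1/4})$, so its contribution to the real part is $O(n^{-1/2})$. The clean fix is to treat $\tau^3F''(\tau/2)=\mathcal B(\tau)$ as one analytic function that is real on the real axis. Then $\Real\bigl(\tau^3F''(\tau/2)\bigr)=B(z)+O(n^{-1/2})$ follows by the same parity argument you already use for $m=1$.
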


\begin{figure}[tbp]
  \begin{minipage}{0.45\textwidth}
    \centering
    \includegraphics[width=0.9\textwidth]{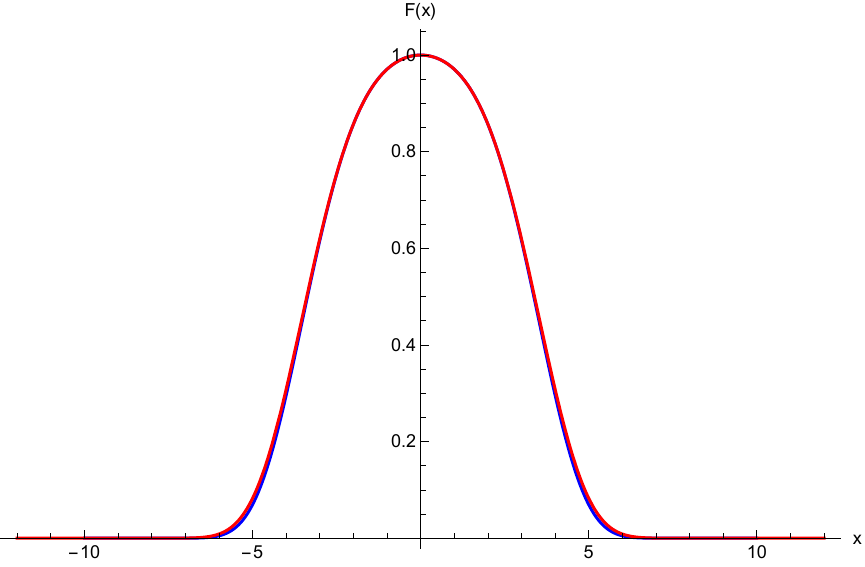}
  \end{minipage}%
  \begin{minipage}{0.45\textwidth}
    \centering
    \includegraphics[width=0.9\textwidth]{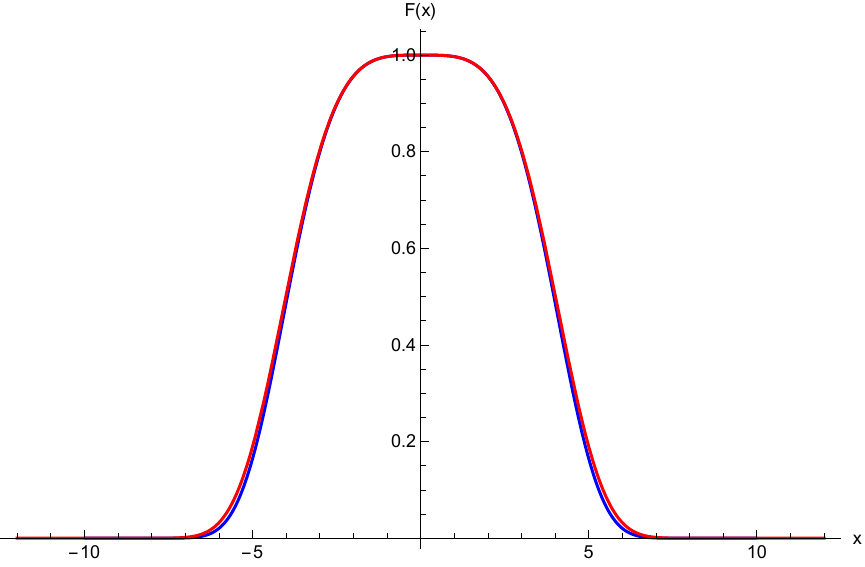}
  \end{minipage}
  \caption{Numerical illustration of Theorem A with $u=1$, $z=2$,
    $v=1$, $\delta=3/4$, and $x=\ell/\sqrt n$. In each panel, the
    blue curve is the modulus of the centred $(p,q)$-binomial coefficient
    ratio and the red curve is the approximation in
    \eqref{eq:main-sqrt-pq}. Left: $n=450$, with approximation
    $\exp(-0.0269x^2-0.00292x^4)$. Right: $n=496$, with approximation
    $\exp(-0.000590x^2-0.00264x^4)$, which is nearly quartic.}
  \label{fig:theoremA-central}
\end{figure}

\begin{figure}[tbp]
  \begin{minipage}{0.45\textwidth}
    \centering
    \includegraphics[width=0.9\textwidth]{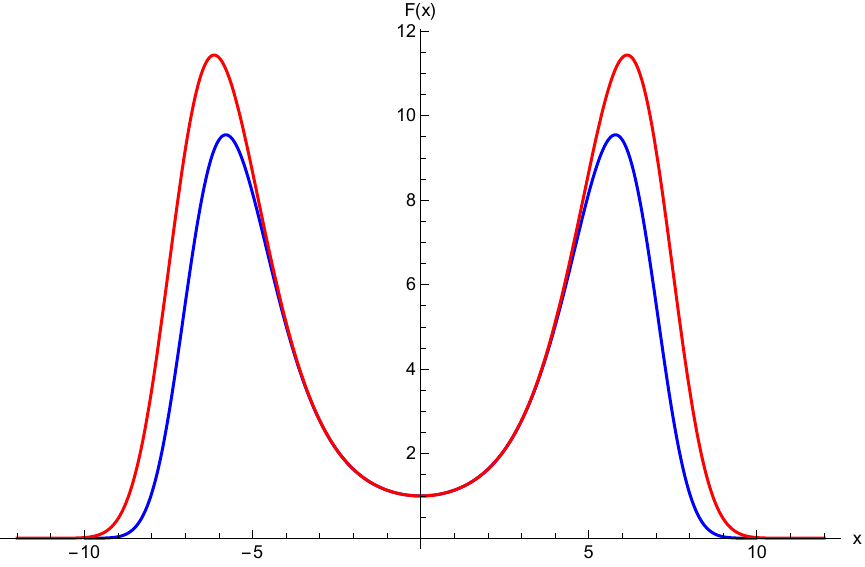}
  \end{minipage}%
  \begin{minipage}{0.45\textwidth}
    \centering
    \includegraphics[width=0.9\textwidth]{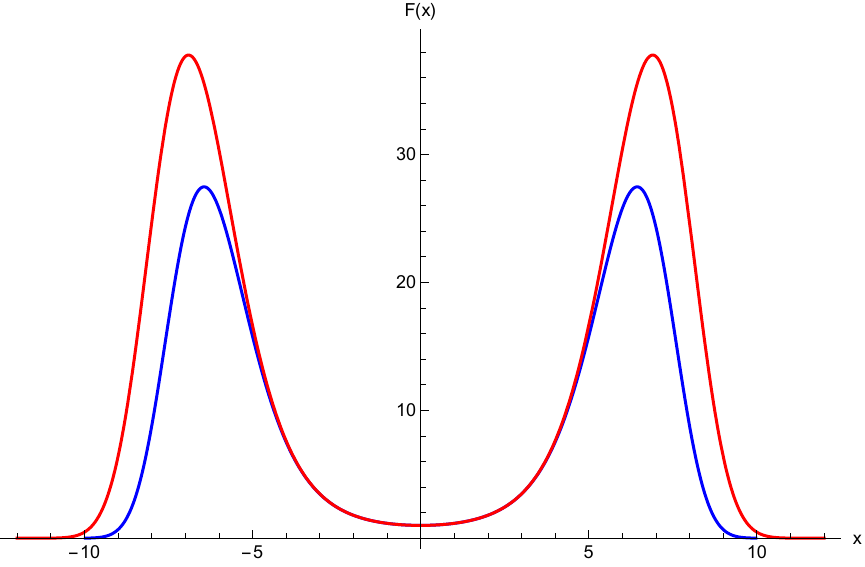}
  \end{minipage}
  \caption{The same comparison as in Figure~\ref{fig:theoremA-central},
    farther into the bimodal regime. Left: $n=750$, with approximation
    $\exp(0.129x^2-0.00171x^4)$. Right: $n=800$, with approximation
    $\exp(0.152x^2-0.00160x^4)$. The increasing discrepancy away from the
    centre is consistent with the local range stated in Theorem A.}
  \label{fig:theoremA-bimodal}
\end{figure}

\begin{figure}[tbp]
  \begin{minipage}{0.45\textwidth}
    \centering
    \includegraphics[width=0.9\textwidth]{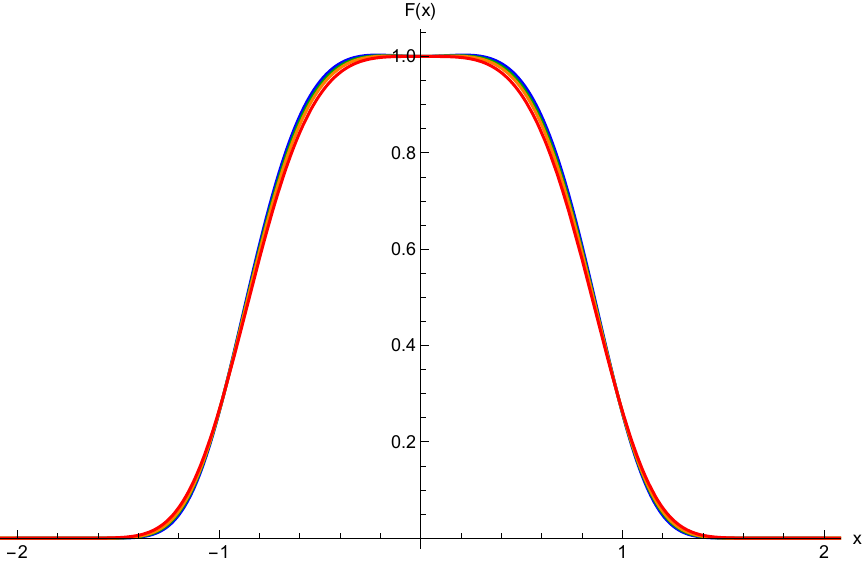}
  \end{minipage}%
  \begin{minipage}{0.45\textwidth}
    \centering
    \includegraphics[width=0.9\textwidth]{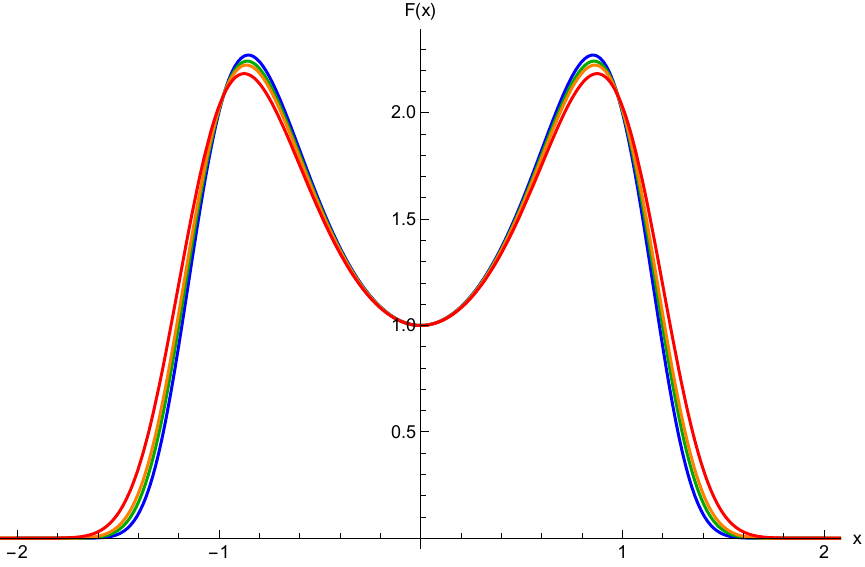}
  \end{minipage}
  \caption{Numerical illustration of Theorem B with
    $x=\ell/n^{3/4}$. The blue, green, and orange curves are the moduli of
    the centred $(p,q)$-binomial coefficient ratios for $n=256$, $512$,
    and $1024$, respectively; the red curve is the limiting function.
    Left: $z=\alpha=1$ and
    $\gamma=-\alpha^2A''(z)/4\approx-0.0406$, giving the limit
    $\exp(-1.33x^4)$. Right: $z=\alpha=1$ and $\gamma=2$, giving the
    limit $\exp(2.04x^2-1.33x^4)$.}
  \label{fig:theoremB-limit}
\end{figure}

The paper is organised as follows.
Section~\ref{sec:one-parameter} proves the one-parameter local formulae,
including the absolutely convergent logarithmic series for the second-order ratio.
Section~\ref{sec:reduction} transfers these formulae to the full
$(p,q)$-coefficients and establishes self-reciprocity.
Section~\ref{sec:complex-central} contains the complex estimates near the
centre, including the result used in Theorem A.
Section~\ref{sec:small-imaginary-perturbation} treats the imaginary
perturbation of order $n^{-5/4}$ and proves Theorem B.
Section~\ref{sec:conclusion} gives further problems.

\section{The one-parameter local formulae}\label{sec:one-parameter}

This section proves the one-parameter formulae used below.
Lemma~\ref{lem:R-hyperbolic} gives the hyperbolic
form of the second-order ratio. Proposition~\ref{prop:Lambda} gives the logarithmic
series for the second-order ratio; its exponential is exactly $R(n,k;q)$, while
its real part is $\log|R(n,k;q)|$. Proposition~\ref{prop:complex-local} and
Corollary~\ref{cor:real-Lambda} give the complex and real local estimates
near the centre.

We use $q$ as the one-parameter variable in this section. For an integer
$n\ge0$ and $|q|<1$ define
\[
\pqbinom{n}{k}{q}
:=
\prod_{j=1}^{k}\frac{1-q^{n-k+j}}{1-q^j}
\qquad (0\le k\le n),
\]
where the empty product is one.
Since $|q|<1$, all factors $1-q^j$ with $j\ge1$ are non-zero. For
$n\ge2$ and $0\le k\le n-1$ the adjacent ratio is
\begin{equation}\label{eq:one-param-rho}
\frac{\pqbinom{n}{k+1}{q}}{\pqbinom{n}{k}{q}}
=\frac{1-q^{n-k}}{1-q^{k+1}}.
\end{equation}
Consequently, for $1\le k\le n-1$ the curvature ratio is
\begin{equation}\label{eq:R-basic}
R(n,k;q)
:=
\frac{\pqbinom{n}{k-1}{q}\,\pqbinom{n}{k+1}{q}}
     {\pqbinom{n}{k}{q}^{2}}
=
\frac{(1-q^{n-k})(1-q^k)}{(1-q^{n-k+1})(1-q^{k+1})}.
\end{equation}

The same ratio will be used in a hyperbolic form when $q=e^{-t}$. This
form is especially convenient on the real axis, where Taylor expansion of
$\log\sinh$ gives the constants $A$ and $B$.

\begin{lemma}\label{lem:R-hyperbolic}
Let $t\in\mathbb C$ with $\Real t>0$, let $q=e^{-t}$, and let
$1\le k\le n-1$. Then
\begin{equation}\label{eq:R-sinh}
R(n,k;e^{-t})
=
e^t\,
\frac{\sinh(\tfrac{k t}{2})\sinh(\tfrac{(n-k)t}{2})}
{\sinh(\tfrac{(k+1)t}{2})\sinh(\tfrac{(n-k+1)t}{2})}.
\end{equation}
\end{lemma}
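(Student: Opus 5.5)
The identity follows from \eqref{eq:R-basic} by factoring each term. The basic tool is, for every integer $m\ge1$,
\[
1-e^{-mt}=e^{-mt/2}\bigl(e^{mt/2}-e^{-mt/2}\bigr)=2e^{-mt/2}\sinh\!\left(\tfrac{mt}{2}\right).
\]
Since $\Real t>0$ and $m\ge1$, we have $|e^{-mt}|<1$. Hence each factor $1-q^m$ in \eqref{eq:R-basic} is non-zero, and so is $\sinh(mt/2)$. This means every quotient below is literal, and no logarithm or branch choice is needed.

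Apply this factorisation with $m=n-k$, $k$, $n-k+1$ and $k+1$; all four indices are at least $1$ because $1\le k\le n-1$. Then
\[
R(n,k;e^{-t})=\frac{4\,e^{-(n-k)t/2}e^{-kt/2}\sinh(\tfrac{(n-k)t}{2})\sinh(\tfrac{kt}{2})}{4\,e^{-(n-k+1)t/2}e^{-(k+1)t/2}\sinh(\tfrac{(n-k+1)t}{2})\sinh(\tfrac{(k+1)t}{2})}.
\]
The factors of $4$ cancel. In the numerator the exponentials combine to $e^{-nt/2}$, and in the denominator to $e^{-(n+2)t/2}$. Their quotient is $e^{t}$, which gives \eqref{eq:R-sinh}.

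No step here is a real obstacle. The only points to check are that the exponential prefactors give exactly $e^{t}$, not $e^{-t}$, and that all denominators are non-zero. Both were handled above: the non-vanishing comes from $\Real t>0$, which excludes $e^{mt}=1$ for $m\ge1$.
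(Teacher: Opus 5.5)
Your proof is correct and follows the paper's argument exactly: substitute $1-e^{-x}=2e^{-x/2}\sinh(x/2)$ into \eqref{eq:R-basic}, cancel the constant factors, and check that the exponential prefactors combine to $e^{t}$. Your explicit check that no factor vanishes because $\Real t>0$ is a small addition that the paper leaves implicit.
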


\begin{proof}
Use $1-e^{-x}=2e^{-x/2}\sinh(x/2)$ in \eqref{eq:R-basic}. The
factors $2$ cancel. The exponential part is
\[
\frac{e^{-k t/2}e^{-(n-k)t/2}}
{e^{-(k+1)t/2}e^{-(n-k+1)t/2}}=e^t,
\]
and the remaining factors give \eqref{eq:R-sinh}.
\end{proof}

For complex $q$ it is safer not to choose a logarithm of the quotient
\eqref{eq:R-basic}. The following series is an ordinary absolutely
convergent series. It defines a logarithm of $R(n,k;q)$ only through its
exponential identity.

\begin{proposition}\label{prop:Lambda}
Let $t\in\mathbb C$ with $\Real t>0$, let $q=e^{-t}$, and let
$1\le k\le n-1$. Define
\begin{equation}\label{eq:Lambda-def}
\Lambda(n,k;q)
:=
-\sum_{m=1}^{\infty}\frac{1-q^m}{m}\left(q^{mk}+q^{m(n-k)}\right).
\end{equation}
Then the series converges absolutely and
\begin{equation}\label{eq:Lambda-exp}
e^{\Lambda(n,k;q)}=R(n,k;q).
\end{equation}
Moreover,
\begin{equation}\label{eq:Lambda-real}
\Real\Lambda(n,k;q)=\log|R(n,k;q)|.
\end{equation}
If
\[
\mu:=\min\{k,n-k\}\,\Real t,
\]
then $\mu>0$ and, for every integer $M\ge0$,
\begin{equation}\label{eq:Lambda-remainder}
\left|
\sum_{m=M+1}^{\infty}\frac{1-q^m}{m}\left(q^{mk}+q^{m(n-k)}\right)
\right|
\le
\frac{4e^{-(M+1)\mu}}{(M+1)(1-e^{-\mu})}.
\end{equation}
\end{proposition}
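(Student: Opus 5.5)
The plan is to expand each of the four factors of $R(n,k;q)$ in \eqref{eq:R-basic} with the principal series $\log(1-y)=-\sum_{m\ge1}y^m/m$, valid for $|y|<1$. Then I regroup the resulting terms into \eqref{eq:Lambda-def} and exponentiate termwise, which avoids choosing any branch of the logarithm of the quotient. Since $|q|=e^{-\Real t}<1$, every power $q^j$ with $j\ge1$ lies in the open unit disc. For such $y$, $L(y):=-\sum_m y^m/m$ converges absolutely and satisfies $e^{L(y)}=1-y$.

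\textbf{Absolute convergence and the tail bound.} For $1\le k\le n-1$ we have $\min\{k,n-k\}\ge1$ and $\Real t>0$, so $\mu>0$. Using $|1-q^m|\le 2$ and $|q^{mk}|+|q^{m(n-k)}|\le 2e^{-m\mu}$, the $m$-th term is bounded in modulus by $4e^{-m\mu}/m$. This gives absolute convergence of \eqref{eq:Lambda-def}. For the tail I bound $1/m\le 1/(M+1)$ for $m\ge M+1$ and sum the geometric series:
\[
\sum_{m\ge M+1}\frac{4e^{-m\mu}}{m}\le\frac{4}{M+1}\cdot\frac{e^{-(M+1)\mu}}{1-e^{-\mu}},
\]
which is exactly \eqref{eq:Lambda-remainder}.

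\textbf{Exponential identity.} Expanding $(1-q^m)(q^{mk}+q^{m(n-k)})$ gives four pieces. Each absolutely convergent piece can be summed separately, so
\[
\Lambda = L(q^k)+L(q^{n-k})-L(q^{k+1})-L(q^{n-k+1}),
\]
because $q^m q^{mk}=(q^{k+1})^m$ and $q^m q^{m(n-k)}=(q^{n-k+1})^m$. Exponentiating and using $e^{a+b}=e^ae^b$ together with $e^{L(y)}=1-y$ yields
\[
e^{\Lambda}=\frac{(1-q^k)(1-q^{n-k})}{(1-q^{k+1})(1-q^{n-k+1})}=R(n,k;q).
\]
The identity $e^{L(y)}=1-y$ for $|y|<1$ is standard, by analytic continuation from real $y$ or by differentiating $(1-y)e^{-L(y)}$ and finding it constant.

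\textbf{Real part.} Finally, $|e^{\Lambda}|=e^{\Real\Lambda}$, so $\Real\Lambda=\log|R|$, which is \eqref{eq:Lambda-real}; in particular $R\neq0$ automatically. The only delicate point is justifying the splitting of $\Lambda$ into four separately convergent series, and this follows from absolute convergence of each geometric-type series since every base has modulus less than one. No step is a genuine obstacle.
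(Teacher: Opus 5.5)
Your proposal is correct and follows the paper's proof essentially step for step. The same bounds $|1-q^m|\le 2$ and $|q|^{mk}+|q|^{m(n-k)}\le 2e^{-m\mu}$ give absolute convergence and the tail estimate, and the same termwise expansion via $-\sum_m y^m/m$ (which the paper names as the principal logarithm of $1-y$) splits $\Lambda$ into four logarithms whose exponential is $R(n,k;q)$.
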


\begin{proof}
Since $|q|<1$ and $1\le k\le n-1$, one has
\[
|1-q^m|\le 1+|q|^m\le2,
\qquad
|q|^{mk}+|q|^{m(n-k)}\le2e^{-m\mu}.
\]
Hence
\[
\left|
\sum_{m=M+1}^{\infty}\frac{1-q^m}{m}\left(q^{mk}+q^{m(n-k)}\right)
\right|
\le
4\sum_{m=M+1}^{\infty}\frac{e^{-m\mu}}{m}.
\]
Since
\[
\sum_{m=M+1}^{\infty}\frac{e^{-m\mu}}{m}
\le
\frac{e^{-(M+1)\mu}}{M+1}\sum_{j=0}^{\infty}e^{-j\mu}
=
\frac{e^{-(M+1)\mu}}{(M+1)(1-e^{-\mu})},
\]
this proves \eqref{eq:Lambda-remainder}. In particular, the series
\eqref{eq:Lambda-def} converges absolutely.

For $j\ge1$, the principal logarithm satisfies
\[
\log(1-q^j)=-\sum_{m=1}^{\infty}\frac{q^{mj}}{m},
\]
because $|q^j|<1$ and hence $1-q^j$ lies in the right half-plane. Thus
\begin{align*}
\Lambda(n,k;q)
&=
\log(1-q^k)-\log(1-q^{k+1})
+\log(1-q^{n-k})-\log(1-q^{n-k+1}).
\end{align*}
Exponentiating this identity gives
\[
e^{\Lambda(n,k;q)}
=
\frac{(1-q^k)(1-q^{n-k})}{(1-q^{k+1})(1-q^{n-k+1})}
=
R(n,k;q),
\]
which proves \eqref{eq:Lambda-exp}. Taking absolute values in
\eqref{eq:Lambda-exp} gives
\[
e^{\Real\Lambda(n,k;q)}=|R(n,k;q)|,
\]
and therefore \eqref{eq:Lambda-real}.
\end{proof}

We now return to the quotient parameter $r$ and let it depend on $n$ as in
the introduction. In the central range both $k\Real t$ and
$(n-k)\Real t$ stay bounded away from zero, while $t$ itself is small.
This permits the expansion of $1-e^{-m t}$ inside \eqref{eq:Lambda-def}.

\begin{proposition}\label{prop:complex-local}
Fix an admissible set $\mathcal D$ and a constant $L>0$. For
\[
r=\exp(-t),
\qquad
t=\frac{v}{n}+i\,\frac{\delta}{n^{3/4}},
\qquad
(v,\delta)\in \mathcal D,
\]
and $1\le k\le n-1$, define
\begin{align}
S_1(n,k;r)
&:=
\frac{1}{e^{k t}-1}
+
\frac{1}{e^{(n-k)t}-1},\label{eq:S1-def}\\[1mm]
S_2(n,k;r)
&:=
\frac{e^{k t}}{(e^{k t}-1)^2}
+
\frac{e^{(n-k)t}}{(e^{(n-k)t}-1)^2}.\label{eq:S2-def}
\end{align}
Then, for all sufficiently large $n$ and all integers $k$ with
\[
|k-n/2|\le L n^{3/4},
\]
one has $1\le k\le n-1$ and
\begin{align}
\Lambda(n,k;r)
&=
-t S_1(n,k;r)
+O_{\mathcal D,L}(n^{-3/2}),\label{eq:complex-first-order}\\[1mm]
\Lambda(n,k;r)
&=
-t S_1(n,k;r)
+\frac{t^2}{2}\,S_2(n,k;r)
+O_{\mathcal D,L}(|t|^3).\label{eq:complex-second-order}
\end{align}
\end{proposition}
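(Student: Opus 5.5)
Starting from Proposition~\ref{prop:Lambda}, I expand the factor $1-r^m=1-e^{-mt}$ in powers of $mt$ and resum term by term against the geometric weights $r^{mk}+r^{m(n-k)}$. For $|k-n/2|\le Ln^{3/4}$ both $k$ and $n-k$ equal $n/2+O(n^{3/4})$, so $1\le k\le n-1$ for large $n$. Also $\mu=\min\{k,n-k\}\Real t\ge v_{\min}/3$ is bounded below uniformly on $\mathcal D$. Put $a:=e^{-kt}$ and $b:=e^{-(n-k)t}$, so $|a|,|b|\le e^{-\mu}\le c<1$ with $c$ depending only on $\mathcal D,L$.

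The three moments I need are
\[
\sum_{m\ge1}\frac{a^m}{m}\,m=\frac{a}{1-a}=\frac{1}{e^{kt}-1},\qquad
\sum_{m\ge1}\frac{a^m}{m}\,m^2=\frac{a}{(1-a)^2}=\frac{e^{kt}}{(e^{kt}-1)^2},
\]
together with $\sum_{m\ge1} m^{j-1}|a|^m=O_c(1)$ for every fixed $j$; the same holds for $b$. I then write
\[
1-e^{-mt}=mt-\frac{m^2t^2}{2}+E_3(mt),\qquad |E_3(s)|\le \frac{|s|^3}{6}\,e^{|\Real s|}\ \text{ for }\Real s\ge 0.
\]
More simply, for $\Real s\ge0$ the bound $|E_3(s)|\le|s|^3/6$ holds directly, since the third derivative $-e^{-s}$ has modulus at most $1$ along the segment $[0,s]$ in the right half-plane. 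Substituting into \eqref{eq:Lambda-def} gives
\[
\Lambda=-tS_1+\frac{t^2}{2}S_2+O\Big(|t|^3\sum_m m^2(|a|^m+|b|^m)\Big),
\]
which is \eqref{eq:complex-second-order}. Absolute convergence lets me split the sums.

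For \eqref{eq:complex-first-order} I stop one order earlier, using $|1-e^{-s}-s|\le|s|^2/2$ for $\Real s\ge0$. This yields $\Lambda=-tS_1+O(|t|^2)$. Since $|t|^2=O_{\mathcal D}(n^{-3/2})$, because $|t|\le v_{\max}/n+\delta_{\max}n^{-3/4}$, the claim follows. Alternatively it follows from \eqref{eq:complex-second-order} by noting that $t^2S_2/2=O(|t|^2)$, as $|S_2|\le 2\sum_m m c^m$.

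The main obstacle is the uniform control of the geometric sums. This needs $|e^{-kt}|\le c<1$, and also that the denominators $e^{kt}-1$ stay away from zero. Both follow from $\Real(kt)=kv/n\ge v_{\min}/3$, whatever the oscillating imaginary part $k\delta n^{-3/4}$, which is of size $n^{1/4}$, does. I would stress that all bounds depend on $|a|,|b|$ only through $\Real t$. Hence the growing imaginary part never enters, and the constants depend only on $v_{\min},v_{\max},\delta_{\max},L$.
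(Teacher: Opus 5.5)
Your proposal is correct and follows the paper's proof essentially step for step. You Taylor-expand $1-e^{-mt}$ using the right-half-plane remainder bounds $|s|^2/2$ and $|s|^3/6$, and resum against the geometric weights $a^m+b^m$, which are uniformly below $1$ in modulus because $\Real(kt)$ and $\Real((n-k)t)$ are bounded below. The remainders are then controlled by $|t|^{j}\sum_m m^{j-1}\sigma^m$; using $n/3$ instead of the paper's $n/4$ as the lower bound for $\min\{k,n-k\}$ is immaterial.
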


\begin{proof}
For all sufficiently large $n$, the condition $|k-n/2|\le L n^{3/4}$
implies
\[
k\ge \frac n4,
\qquad
n-k\ge \frac n4.
\]
Set
\[
a:=e^{-k t},
\qquad
b:=e^{-(n-k)t},
\qquad
\sigma:=e^{-v_{\min}/4}.
\]
Then $0<\sigma<1$ and
\[
|a|=e^{-k\Real t}\le \sigma,
\qquad
|b|=e^{-(n-k)\Real t}\le \sigma.
\]

For the first expansion, use the identity
\[
1-e^{-x}=x-x^2\int_0^1(1-s)e^{-sx}\ud s.
\]
If $\Real x\ge0$, this gives
\[
|1-e^{-x}-x|\le \frac{|x|^2}{2}.
\]
With $x=m t$ we may write
\[
1-e^{-m t}=m t+\varepsilon_m,
\qquad
|\varepsilon_m|\le \frac{m^2|t|^2}{2}.
\]
Substitution in \eqref{eq:Lambda-def} gives
\[
\Lambda(n,k;r)
=
-t\sum_{m=1}^{\infty}(a^m+b^m)
-
\sum_{m=1}^{\infty}\frac{\varepsilon_m}{m}(a^m+b^m).
\]
The remainder is bounded by
\[
\left|
\sum_{m=1}^{\infty}\frac{\varepsilon_m}{m}(a^m+b^m)
\right|
\le
\frac{|t|^2}{2}\sum_{m=1}^{\infty}m(|a|^m+|b|^m)
\le
|t|^2\sum_{m=1}^{\infty}m\sigma^m
=
\frac{\sigma}{(1-\sigma)^2}|t|^2.
\]
Moreover,
\[
\sum_{m=1}^{\infty}(a^m+b^m)
=
\frac{a}{1-a}+\frac{b}{1-b}
=
\frac{1}{e^{k t}-1}+\frac{1}{e^{(n-k)t}-1}
=
S_1(n,k;r).
\]
Since $|t|=O_{\mathcal D}(n^{-3/4})$, this proves
\eqref{eq:complex-first-order}.

For the second expansion, use
\[
1-e^{-x}
=
x-\frac{x^2}{2}
+x^3\int_0^1\frac{(1-s)^2}{2}e^{-sx}\ud s.
\]
For $\Real x\ge0$,
\[
\left|1-e^{-x}-x+\frac{x^2}{2}\right|
\le
\frac{|x|^3}{6}.
\]
Thus
\[
1-e^{-m t}
=
m t-\frac{m^2t^2}{2}+\eta_m,
\qquad
|\eta_m|\le \frac{m^3|t|^3}{6}.
\]
Substitution in \eqref{eq:Lambda-def} gives
\[
\Lambda(n,k;r)
=
-t\sum_{m=1}^{\infty}(a^m+b^m)
+\frac{t^2}{2}\sum_{m=1}^{\infty}m(a^m+b^m)
-
\sum_{m=1}^{\infty}\frac{\eta_m}{m}(a^m+b^m).
\]
The last sum is
\[
O_{\mathcal D,L}\!\left(
|t|^3\sum_{m=1}^{\infty}m^2\sigma^m
\right)
=O_{\mathcal D,L}(|t|^3).
\]
Furthermore,
\[
\sum_{m=1}^{\infty}m(a^m+b^m)
=
\frac{a}{(1-a)^2}+\frac{b}{(1-b)^2}
=
\frac{e^{k t}}{(e^{k t}-1)^2}
+
\frac{e^{(n-k)t}}{(e^{(n-k)t}-1)^2}
=
S_2(n,k;r).
\]
This proves \eqref{eq:complex-second-order}.
\end{proof}

On the real axis the hyperbolic formula gives a sharper expansion. This is
where the two functions $A$ and $B$ enter the paper.

\begin{corollary}\label{cor:real-Lambda}
Fix constants $0<z_0\le z_1<\infty$ and $L>0$. Let
$z\in[z_0,z_1]$ and let $q=e^{-z/n}$. Assume that $n$ is even and write
$\kappa:=n/2$. Then, uniformly for $z\in[z_0,z_1]$, all sufficiently
large even $n$, and all integers $\ell$ with $|\ell|\le L n^{3/4}$,
\begin{equation}\label{eq:real-logR}
\log R\!\left(n,\kappa+\ell;e^{-z/n}\right)
=
-\frac{A(z)}{n}
-\frac{B(z)}{n^3}\,\ell^2
+O_{z_0,z_1,L}(n^{-2}),
\end{equation}
where $A$ and $B$ are the functions defined in \eqref{eq:intro-AB}. Consequently,
\begin{equation}\label{eq:real-logR-centred}
\log R\!\left(n,\kappa+\ell;e^{-z/n}\right)-\log R\!\left(n,\kappa;e^{-z/n}\right)
=
-\frac{B(z)}{n^3}\,\ell^2
+O_{z_0,z_1,L}(n^{-2}).
\end{equation}
\end{corollary}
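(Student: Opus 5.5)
The plan is to work directly from the hyperbolic form in Lemma~\ref{lem:R-hyperbolic} with real $t=z/n$. On the real axis every factor is positive, so $\log R$ is the ordinary real logarithm and no branch issues arise. Put $f:=\log\sinh$ on $(0,\infty)$, $c:=z/4$, $h:=z/(2n)$ and $s:=\ell z/(2n)$. With $k=\kappa+\ell$ we have $kt/2=c+s$ and $(n-k)t/2=c-s$, so \eqref{eq:R-sinh} becomes
\[
\log R(n,\kappa+\ell;e^{-z/n})
=2h+\bigl[f(c+s)-f(c+s+h)\bigr]+\bigl[f(c-s)-f(c-s+h)\bigr].
\]
The hypothesis $|\ell|\le Ln^{3/4}$ gives $s=O(n^{-1/4})$ and $h=O(n^{-1})$. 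Hence for large $n$ all arguments $c\pm s$, $c\pm s+h$ lie in a fixed compact interval such as $[z_0/8,\,z_1/2]\subset(0,\infty)$. On that interval $f$ and all its derivatives are bounded uniformly in $z\in[z_0,z_1]$, and this is the source of all uniformity in the $O$-constants.

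\textbf{Step 1: expand in $h$.} Taylor's formula with remainder gives
\[
f(y)-f(y+h)=-hf'(y)-\tfrac{h^2}{2}f''(y)+O(h^3).
\]
Summing over $y=c\pm s$ yields
\[
\log R=2h-h\bigl[f'(c+s)+f'(c-s)\bigr]-\tfrac{h^2}{2}\bigl[f''(c+s)+f''(c-s)\bigr]+O(n^{-3}).
\]
The $h^2$ term is $O(n^{-2})$ as it stands, so only the $h$-term needs a finer expansion.

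\textbf{Step 2: expand in $s$.} By parity the odd powers of $s$ cancel:
\[
f'(c+s)+f'(c-s)=2f'(c)+s^2f'''(c)+O(s^4).
\]
The error contributes $h\,O(s^4)=O(n^{-1}\cdot n^{-1})=O(n^{-2})$. This bookkeeping of error orders is the only delicate point, and it works precisely because $\ell=O(n^{3/4})$ makes $s^4h$ of order $n^{-2}$. We are left with
\[
\log R=2h\bigl(1-\coth c\bigr)-hs^2f'''(c)+O(n^{-2}).
\]

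\textbf{Step 3: identify the constants.} The first term is
\[
2h(1-\coth c)=-\frac zn\Bigl(\coth\frac z4-1\Bigr)=-\frac{A(z)}{n}.
\]
For the second, $f'''=2\cosh/\sinh^3$, so
\[
hs^2f'''(c)=\frac{z^3\cosh(z/4)}{4\sinh^3(z/4)}\,\frac{\ell^2}{n^3}.
\]
Using $\sinh(z/2)=2\sinh(z/4)\cosh(z/4)$ shows that this coefficient equals $B(z)$, which proves \eqref{eq:real-logR}. Finally, \eqref{eq:real-logR-centred} follows by subtracting the case $\ell=0$ of \eqref{eq:real-logR}, since the $O(n^{-2})$ bounds are uniform in $\ell$.
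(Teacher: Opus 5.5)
Your proof is correct and follows the paper's argument essentially step for step. Like the paper, you use the hyperbolic form from Lemma~\ref{lem:R-hyperbolic}, Taylor-expand $\log\sinh$ first in $h=z/(2n)$ and then, by parity, in $z\ell/(2n)$ with the key bound $h\cdot(z\ell/(2n))^4=O(n^{-2})$, and identify the constants with $A$ and $B$. The only cosmetic difference is that you carry the $h^2$ term explicitly before discarding it, whereas the paper absorbs it directly into the $O(n^{-2})$ remainder.
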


\begin{proof}
Set
\[
a:=\frac z4,
\qquad
h:=\frac{z}{2n},
\qquad
\Delta:=\frac{z\ell}{2n}.
\]
Since $z\in[z_0,z_1]$ and $|\ell|\le L n^{3/4}$, one has
$\Delta=O_{z_0,z_1,L}(n^{-1/4})$ and $h=O_{z_1}(n^{-1})$. For all
sufficiently large $n$, the points $a\pm\Delta$ and
$a+h\pm\Delta$ lie in a compact subinterval of $(0,\infty)$ that
only depends on $z_0,z_1$, and $L$.

By Lemma~\ref{lem:R-hyperbolic},
\[
\log R\!\left(n,\kappa+\ell;e^{-z/n}\right)
=
\frac zn
+\psi(a+\Delta)+\psi(a-\Delta)
-\psi(a+h+\Delta)-\psi(a+h-\Delta),
\]
where $\psi(y):=\log\sinh y$. Taylor's formula in the variable $h$
gives, uniformly for the above values of $y$,
\[
\psi(y)-\psi(y+h)
=
-h\psi'(y)+O_{z_0,z_1,L}(n^{-2}).
\]
Therefore
\[
\log R
=
\frac zn
-h\left(\psi'(a+\Delta)+\psi'(a-\Delta)\right)
+O_{z_0,z_1,L}(n^{-2}).
\]
Taylor's formula in the variable $\Delta$ gives
\[
\psi'(a+\Delta)+\psi'(a-\Delta)
=
2\psi'(a)+\psi^{(3)}(a)\Delta^2+O_{z_0,z_1,L}(\Delta^4).
\]
Since $h\Delta^4=O_{z_0,z_1,L}(n^{-2})$, substitution yields
\[
\log R
=
\frac zn-2h\psi'(a)-h\psi^{(3)}(a)\Delta^2
+O_{z_0,z_1,L}(n^{-2}).
\]
Now $\psi'(a)=\coth a$, and hence
\[
\frac zn-2h\psi'(a)
=
-\frac{z}{n}\left(\coth\frac z4-1\right)
=
-\frac{A(z)}{n}.
\]
Moreover,
\[
\psi^{(3)}(a)=2\coth(a)\csch^2(a)=\frac{\sinh(2a)}{\sinh^4(a)},
\]
and therefore
\[
-h\psi^{(3)}(a)\Delta^2
=
-\frac{z}{2n}\,\psi^{(3)}\!\left(\frac z4\right)
\frac{z^2\ell^2}{4n^2}
=
-\frac{B(z)}{n^3}\,\ell^2.
\]
This proves \eqref{eq:real-logR}. The same estimate with $\ell=0$ is
valid uniformly; subtracting it from \eqref{eq:real-logR} gives
\eqref{eq:real-logR-centred}.
\end{proof}

\section{Reduction from $(p,q)$ to one parameter}\label{sec:reduction}

This section transfers the one-parameter formulae to the full
$(p,q)$-coefficients. Lemma~\ref{lem:pq-factor} separates the explicit factor
$p^{k(n-k)}$ from the one-parameter coefficient. Corollary~\ref{cor:selfreciprocal}
gives the self-reciprocal generating polynomial, and
Corollary~\ref{cor:local-ratios} gives the adjacent-ratio and second-order ratio
reductions used in Sections~\ref{sec:complex-central} and
\ref{sec:small-imaginary-perturbation}.

The first lemma is elementary, but it fixes the hypotheses needed later and
separates the explicit $p$-factor from the one-parameter coefficient.

\begin{lemma}\label{lem:pq-factor}
Let $p,q\in\mathbb C$ with $p\ne0$ and $p^j\ne q^j$ for $1\le j\le n$.
Set $r:=q/p$. Then $r^j\ne1$ for $1\le j\le n$, and for every
$0\le k\le n$,
\begin{equation}\label{eq:pq-factor}
\pqbinom{n}{k}{p,q}
=
p^{\,k(n-k)}\prod_{j=1}^{k}\frac{1-r^{n-k+j}}{1-r^j}
=
p^{\,k(n-k)}\pqbinom{n}{k}{r}.
\end{equation}
\end{lemma}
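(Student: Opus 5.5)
The proof is a direct factor-by-factor computation, so the plan is short.

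First, the claim $r^j\ne1$ for $1\le j\le n$: since $p\ne0$, $r=q/p$ is well defined and $r^j=q^j/p^j$. If $r^j=1$ then $q^j=p^j$, contradicting the hypothesis. Hence every denominator $1-r^j$ with $1\le j\le k\le n$ is non-zero, and the one-parameter product on the right of \eqref{eq:pq-factor} is literally defined. Note that here $|r|<1$ is not assumed, so we use the product formula \eqref{eq:intro-r-binomial} directly, not the definition of Section~\ref{sec:one-parameter}.

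Next, rewrite each factor of \eqref{eq:intro-pq-def}. For every positive integer $m$,
\[
p^m-q^m=p^m(1-r^m).
\]
Apply this to numerator index $m=n-k+j$ and denominator index $m=j$. The $j$-th factor becomes
\[
p^{\,n-k}\,\frac{1-r^{n-k+j}}{1-r^j}.
\]
Here $p^j\ne0$ and $1-r^j\ne0$, so the division is legitimate. Taking the product over $j=1,\dots,k$ collects the power
\[
p^{k(n-k)}=\bigl(p^{\,n-k}\bigr)^k,
\]
which gives the first equality in \eqref{eq:pq-factor}. The second equality is simply the definition \eqref{eq:intro-r-binomial}.

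Finally, the edge case $k=0$ has both sides equal to $1$ (empty product, with $p^0=1$), consistent with the convention. There is no genuine obstacle; the only point needing care is checking that all denominators are non-zero, which the first step handles.
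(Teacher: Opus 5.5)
Your proposal is correct and follows essentially the same route as the paper: rewrite each factor as $p^{n-k}(1-r^{n-k+j})/(1-r^j)$, multiply over $j$, and treat $k=0$ as the empty product. You also give the one-line justification that $r^j\ne1$ (since $r^j=1$ would force $q^j=p^j$), which the paper asserts and uses but does not prove explicitly.
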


\begin{proof}
For each factor in \eqref{eq:intro-pq-def},
\[
\frac{p^{\,n-k+j}-q^{\,n-k+j}}{p^j-q^j}
=
\frac{p^{\,n-k+j}(1-r^{n-k+j})}{p^j(1-r^j)}
=
p^{\,n-k}\frac{1-r^{n-k+j}}{1-r^j}.
\]
The denominators are non-zero because $r^j\ne1$ for $1\le j\le n$.
Multiplication over $j=1,\ldots,k$ gives the factor $p^{k(n-k)}$ and proves
\eqref{eq:pq-factor}. For $k=0$ both products are empty and the identity is
$1=1$.
\end{proof}

The same factorisation also gives the coefficient symmetry. We state it in
two forms, first as an ordinary polynomial identity and then in centred
Laurent form.

\begin{corollary}\label{cor:selfreciprocal}
Assume the hypotheses of Lemma~\ref{lem:pq-factor}. Let
\[
P_n(w):=\sum_{k=0}^{n}\pqbinom{n}{k}{p,q}\,w^k.
\]
Then
\begin{equation}\label{eq:self-recip-polynomial}
w^nP_n(w^{-1})=P_n(w).
\end{equation}
In particular, the zeros of $P_n$ occur in reciprocal pairs, with multiplicity. If $n$ is even
and
\begin{equation}\label{eq:Z-centred}
\mathcal Z_n(w):=\sum_{k=0}^{n}\pqbinom{n}{k}{p,q}\,w^{k-n/2},
\end{equation}
then
\begin{equation}\label{eq:self-recip}
\mathcal Z_n(w^{-1})=\mathcal Z_n(w).
\end{equation}
For odd $n$, the same centred identity is an identity for the corresponding
Laurent polynomial in $w^{1/2}$.
\end{corollary}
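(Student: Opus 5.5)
The plan is to derive the identity \eqref{eq:self-recip-polynomial} from the coefficient symmetry
\[
\pqbinom{n}{k}{p,q}=\pqbinom{n}{n-k}{p,q},\qquad 0\le k\le n,
\]
and then read off the remaining claims.

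\emph{Step 1: symmetry of the one-parameter coefficient.} By Lemma~\ref{lem:pq-factor}, the full coefficient equals $p^{k(n-k)}\pqbinom{n}{k}{r}$. The prefactor $p^{k(n-k)}$ is visibly invariant under $k\mapsto n-k$, so it suffices to show $\pqbinom{n}{k}{r}=\pqbinom{n}{n-k}{r}$. Write $(r;r)_m:=\prod_{j=1}^m(1-r^j)$. Every factor is non-zero for $m\le n$, because $r^j\ne1$ for $1\le j\le n$. The product defining $\pqbinom{n}{k}{r}$ then telescopes to
\[
\frac{(r;r)_n}{(r;r)_k\,(r;r)_{n-k}}.
\]
Indeed, the numerator $\prod_{j=1}^k(1-r^{n-k+j})$ equals $(r;r)_n/(r;r)_{n-k}$. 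This expression is symmetric in $k$ and $n-k$, which gives the symmetry. This is the only point where the hypothesis $p^j\ne q^j$ matters: it lets us divide by $(r;r)_{n-k}$.

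\emph{Step 2: the polynomial identity.} We compute
\[
w^nP_n(w^{-1})=\sum_k \pqbinom{n}{k}{p,q}w^{n-k}.
\]
Reindexing by $k'=n-k$ and applying Step 1 gives $P_n(w)$. This proves \eqref{eq:self-recip-polynomial}.

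\emph{Step 3: reciprocal zeros with multiplicity.} The leading coefficient $\pqbinom{n}{n}{p,q}$ equals the constant term $\pqbinom{n}{0}{p,q}=1$. Hence $P_n$ has degree exactly $n$ and $P_n(0)\ne0$, so every zero is non-zero. From \eqref{eq:self-recip-polynomial}, $P_n(w)=\prod(w-\zeta_i)$ forces
\[
P_n(w)=w^n\prod(w^{-1}-\zeta_i)=\prod(-\zeta_i)\prod\bigl(w-\zeta_i^{-1}\bigr).
\]
So the multiset of zeros is invariant under $\zeta\mapsto\zeta^{-1}$.

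\emph{Step 4: the centred form.} For even $n$ we have $\mathcal Z_n(w)=w^{-n/2}P_n(w)$. Then
\[
\mathcal Z_n(w^{-1})=w^{n/2}P_n(w^{-1})=w^{-n/2}\,w^nP_n(w^{-1})=w^{-n/2}P_n(w)=\mathcal Z_n(w),
\]
which is \eqref{eq:self-recip}. For odd $n$ the same computation goes through verbatim with $w^{1/2}$ as the variable.

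I do not expect a genuine obstacle. The only care needed is in Step 1: the symmetry should be justified via the $(r;r)$ quotient, which requires the non-vanishing hypothesis, rather than by assuming $|r|<1$.
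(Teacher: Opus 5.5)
Your proposal is correct and follows essentially the same route as the paper. You use Lemma~\ref{lem:pq-factor}, the symmetry of $(r;r)_n/((r;r)_k(r;r)_{n-k})$ (which needs $r^j\ne1$) and the invariance of $p^{k(n-k)}$ to get the coefficient symmetry, and reindex to obtain \eqref{eq:self-recip-polynomial}; you then use that the constant and leading coefficients equal $1$ for the reciprocal zeros, and multiply by $w^{-n/2}$ for the centred form, all exactly as the paper does, with your explicit factorisation in Step~3 only spelling out the multiplicity claim the paper states in one line.
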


\begin{proof}
By Lemma~\ref{lem:pq-factor},
\[
\pqbinom{n}{k}{p,q}=p^{\,k(n-k)}\pqbinom{n}{k}{r}.
\]
Since $r^j\ne1$ for $1\le j\le n$,
\[
\pqbinom{n}{k}{r}
=
\frac{(r;r)_n}{(r;r)_k(r;r)_{n-k}},
\qquad
(r;r)_m:=\prod_{j=1}^{m}(1-r^j),
\]
with the convention $(r;r)_0=1$. Hence
$\pqbinom{n}{k}{r}=\pqbinom{n}{n-k}{r}$.  The factor
$p^{\,k(n-k)}$ is also invariant under $k\mapsto n-k$. Therefore
\[
\pqbinom{n}{k}{p,q}=\pqbinom{n}{n-k}{p,q}\qquad (0\le k\le n).
\]
This coefficient symmetry is exactly \eqref{eq:self-recip-polynomial}.
Since the constant and leading coefficients of $P_n$ are both one, zero is
not a zero of $P_n$; consequently every zero is accompanied by its reciprocal, with the same multiplicity.
If $n$ is even, multiplying \eqref{eq:self-recip-polynomial} by $w^{-n/2}$
gives \eqref{eq:self-recip}. The odd case is the same formal identity after
writing the centre with powers of $w^{1/2}$.
\end{proof}

The next corollary gives the way in which the adjacent and second-order ratios change under
the same reduction. Here the explicit quadratic factor in $p$ enters the
discrete second-order difference of the modulus.

\begin{corollary}\label{cor:local-ratios}
Assume the hypotheses of Lemma~\ref{lem:pq-factor} and set $r=q/p$. Then
\begin{align}
\rho_k(p,q)&=p^{\,n-2k-1}\rho_k(r),\qquad 0\le k\le n-1,\label{eq:rho-red}\\
R(n,k;p,q)&=p^{-2}R(n,k;r),\qquad 1\le k\le n-1.\label{eq:R-red}
\end{align}
Consequently, with $g$ as in \eqref{eq:intro-g}, for $1\le k\le n-1$,
\begin{equation}\label{eq:g-second-diff}
\Delta^2 g(k)
:=
g(k+1)-2g(k)+g(k-1)
=
\log|R(n,k;r)|-2\log|p|.
\end{equation}
\end{corollary}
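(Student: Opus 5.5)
The plan is to obtain all three identities directly from Lemma~\ref{lem:pq-factor}, which gives $\pqbinom{n}{k}{p,q}=p^{k(n-k)}\pqbinom{n}{k}{r}$ for every $0\le k\le n$. The one-parameter coefficients are non-zero under the standing hypotheses (each factor $1-r^{j}$ with $1\le j\le n$ is non-zero), so all quotients below are well defined.

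For \eqref{eq:rho-red}, divide the factorisation at $k+1$ by the factorisation at $k$. This gives
\[
\rho_k(p,q)=p^{(k+1)(n-k-1)-k(n-k)}\rho_k(r).
\]
The exponent simplifies as
\[
(k+1)(n-k-1)-k(n-k)=n-k-1-k=n-2k-1.
\]

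For \eqref{eq:R-red}, take the quotient of \eqref{eq:rho-red} at $k$ and at $k-1$:
\[
R(n,k;p,q)=\frac{\rho_k(p,q)}{\rho_{k-1}(p,q)}=p^{(n-2k-1)-(n-2k+1)}\frac{\rho_k(r)}{\rho_{k-1}(r)}=p^{-2}R(n,k;r).
\]
Here $R(n,k;r)=\rho_k(r)/\rho_{k-1}(r)$ by \eqref{eq:intro-R}. Equivalently, one can note that the exponents $k(n-k)$ have second difference $-2$.

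For \eqref{eq:g-second-diff}, observe that
\[
g(k+1)-2g(k)+g(k-1)=\log\left|\frac{\pqbinom{n}{k+1}{p,q}\pqbinom{n}{k-1}{p,q}}{\pqbinom{n}{k}{p,q}^2}\right|=\log|R(n,k;p,q)|,
\]
because $\log|\cdot|$ turns products and quotients of non-zero complex numbers into sums and differences, with no branch choice involved. Substituting \eqref{eq:R-red} then gives $\log|R(n,k;r)|-2\log|p|$.

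No step here is a real obstacle. The only point requiring care is to use $\log|\cdot|$ rather than a complex logarithm, and to confirm that no coefficient vanishes so that $g$ is finite.
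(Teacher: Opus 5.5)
Your proposal is correct and follows the paper's proof essentially step for step: you divide the factorisation of Lemma~\ref{lem:pq-factor} at $k+1$ by the one at $k$, then take the quotient of consecutive adjacent ratios to get the factor $p^{-2}$, and finally apply $\log|\cdot|$ to the product identity for $\Delta^2 g(k)$. Your explicit check that no coefficient vanishes, because $p\ne0$ and every factor $1-r^j$ with $1\le j\le n$ is non-zero, is a small point that the paper leaves implicit.
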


\begin{proof}
From Lemma~\ref{lem:pq-factor},
\[
\frac{\pqbinom{n}{k+1}{p,q}}{\pqbinom{n}{k}{p,q}}
=
p^{(k+1)(n-k-1)-k(n-k)}
\frac{\pqbinom{n}{k+1}{r}}{\pqbinom{n}{k}{r}}
=
p^{\,n-2k-1}\rho_k(r),
\]
which proves \eqref{eq:rho-red}. Dividing the formula for $k$ by the same
formula with $k-1$ gives
\[
\frac{\rho_k(p,q)}{\rho_{k-1}(p,q)}
=
p^{-2}\frac{\rho_k(r)}{\rho_{k-1}(r)},
\]
and hence \eqref{eq:R-red}. Finally,
\[
\Delta^2 g(k)
=
\log\left|\frac{\pqbinom{n}{k-1}{p,q}\pqbinom{n}{k+1}{p,q}}
{\pqbinom{n}{k}{p,q}^{\,2}}\right|
=
\log|R(n,k;p,q)|,
\]
and \eqref{eq:g-second-diff} follows from \eqref{eq:R-red}.
\end{proof}

\section{Complex local estimates near the centre}\label{sec:complex-central}

This section treats the complex case near $k=n/2$. We use the
$n$-dependent parameters from \eqref{eq:intro-parameters}--\eqref{eq:intro-r-exponent}.
Thus $r=e^{-t}$ with $\Real t=v/n>0$. In estimates involving $g$
or the full coefficients we assume $|p|=e^{-u/n}$. The argument of $p$ does
not enter these absolute-value estimates.

Since the coefficients are complex, no order estimate for a positive sequence
is used. Theorem~\ref{thm:complex-curvature} estimates $\log |R(n,k;r)|$
and $\Delta^2 g(k)$; its proof uses Proposition~\ref{prop:complex-local},
\eqref{eq:Lambda-real}, and Corollary~\ref{cor:local-ratios}. Corollary~\ref{cor:complex-center}
gives the central value. Lemma~\ref{lem:complex-middle-exact} and
Theorem~\ref{thm:complex-middle} give centred ratios. Theorem~\ref{thm:complex-sqrt}
gives the estimate for $|\ell|=O(\sqrt n)$ and identifies the coefficient of
$x^2$ for the modulus of the full coefficient.

\begin{theorem}\label{thm:complex-curvature}
Fix an admissible set $\mathcal D$, a constant $L>0$, and a real number $u$.
For each $(v,\delta)\in \mathcal D$, set
\[
r=e^{-t},\qquad
 t=\frac{v}{n}+i\frac{\delta}{n^{3/4}},
\]
and let $p$ satisfy $|p|=e^{-u/n}$ in the definition of $g$ in
\eqref{eq:intro-g}. Then, for all sufficiently large $n$, all
$(v,\delta)\in \mathcal D$, and all integers $k$ with
\[
|k-n/2|\le L n^{3/4},
\]
one has $1\le k\le n-1$ and
\begin{align}
\log\left|R(n,k;r)\right|
&=
-\Real\left(t S_1(n,k;r)\right)
+O_{\mathcal D,L}(n^{-3/2}),\label{eq:complex-curvature-R}\\[1mm]
\Delta^2 g(k)
&=
\frac{2u}{n}
-\Real\left(t S_1(n,k;r)\right)
+O_{\mathcal D,L}(n^{-3/2}).\label{eq:complex-curvature-g}
\end{align}
In particular,
\[
\log\left|R(n,k;r)\right|=O_{\mathcal D,L}(n^{-3/4}),
\qquad
\Delta^2 g(k)=O_{\mathcal D,L,u}(n^{-3/4}).
\]
\end{theorem}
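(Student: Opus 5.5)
The plan is to combine Proposition~\ref{prop:complex-local}, the identity \eqref{eq:Lambda-real}, and Corollary~\ref{cor:local-ratios}; no new analytic input should be needed. First, Proposition~\ref{prop:complex-local} gives, for all sufficiently large $n$ and all $k$ with $|k-n/2|\le Ln^{3/4}$, that $1\le k\le n-1$ and
\[
\Lambda(n,k;r)=-tS_1(n,k;r)+O_{\mathcal D,L}(n^{-3/2}),
\]
uniformly in $(v,\delta)\in\mathcal D$. Since $\Real t=v/n>0$, Proposition~\ref{prop:Lambda} applies with $q=r$. Its identity \eqref{eq:Lambda-real} gives $\log|R(n,k;r)|=\Real\Lambda(n,k;r)$. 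Taking real parts, and using $|\Real\,\xi|\le|\xi|$ for the error term, yields \eqref{eq:complex-curvature-R}.

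Next, Corollary~\ref{cor:local-ratios} gives $\Delta^2g(k)=\log|R(n,k;r)|-2\log|p|$. With $|p|=e^{-u/n}$ we have $-2\log|p|=2u/n$ exactly. Inserting \eqref{eq:complex-curvature-R} gives \eqref{eq:complex-curvature-g}. Only $|p|$ enters here, so the argument of $p$ plays no role.

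For the ``in particular'' bounds, I would show $S_1(n,k;r)=O_{\mathcal D,L}(1)$ in the central range. As in the proof of Proposition~\ref{prop:complex-local}, put $a=e^{-kt}$ and $b=e^{-(n-k)t}$. Then $|a|,|b|\le\sigma=e^{-v_{\min}/4}<1$, and
\[
S_1=\frac{a}{1-a}+\frac{b}{1-b},
\qquad\text{so}\qquad
|S_1|\le\frac{2\sigma}{1-\sigma}.
\]
Moreover $|t|\le v_{\max}/n+\delta_{\max}n^{-3/4}=O_{\mathcal D}(n^{-3/4})$. Hence $\Real(tS_1)=O_{\mathcal D,L}(n^{-3/4})$, and since $n^{-3/2}\le n^{-3/4}$ this gives $\log|R|=O_{\mathcal D,L}(n^{-3/4})$. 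The additional term $2u/n=O_u(n^{-3/4})$ then gives the bound for $\Delta^2g(k)$.

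The only point needing care is uniformity. The threshold ``sufficiently large $n$'' and the implied constants must be independent of $(v,\delta)\in\mathcal D$ and of $k$. This is already built into Proposition~\ref{prop:complex-local}, because $\sigma$ depends only on $v_{\min}$ and the bound on $|t|$ depends only on $v_{\max}$ and $\delta_{\max}$. So I do not expect any real obstacle.
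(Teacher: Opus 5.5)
Your proposal is correct and follows the paper's proof step by step: Proposition~\ref{prop:complex-local} together with \eqref{eq:Lambda-real} for \eqref{eq:complex-curvature-R}, then Corollary~\ref{cor:local-ratios} with $-2\log|p|=2u/n$ for \eqref{eq:complex-curvature-g}, and finally $S_1=O_{\mathcal D,L}(1)$ together with $|t|=O_{\mathcal D}(n^{-3/4})$ for the stated bounds. Your bound $|S_1|\le 2\sigma/(1-\sigma)$ is the same estimate as the paper's $|e^{kt}-1|\ge e^{v_{\min}/4}-1$, since $1/(e^{kt}-1)=a/(1-a)$ and $\sigma/(1-\sigma)=1/(e^{v_{\min}/4}-1)$.
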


\begin{proof}
For all sufficiently large $n$, the central condition implies
$k\ge n/4$ and $n-k\ge n/4$, and hence $1\le k\le n-1$. Proposition~\ref{prop:complex-local} gives
\[
\Lambda(n,k;r)=-t\,S_1(n,k;r)+O_{\mathcal D,L}(n^{-3/2}).
\]
Taking real parts and using Proposition~\ref{prop:Lambda},
\[
\Real \Lambda(n,k;r)=\log\left|R(n,k;r)\right|,
\]
proves \eqref{eq:complex-curvature-R}. Corollary~\ref{cor:local-ratios} gives
\[
\Delta^2 g(k)=\log\left|R(n,k;r)\right|-2\log|p|.
\]
Since $|p|=e^{-u/n}$, one has $-2\log|p|=2u/n$, and
\eqref{eq:complex-curvature-g} follows.

It remains only to prove the stated bound. On the central range,
\[
\Real(k t)=\frac{k v}{n}\ge \frac{v_{\min}}4,
\qquad
\Real((n-k)t)=\frac{(n-k)v}{n}\ge \frac{v_{\min}}4.
\]
Consequently,
\[
|e^{k t}-1|\ge e^{v_{\min}/4}-1,
\qquad
|e^{(n-k)t}-1|\ge e^{v_{\min}/4}-1,
\]
and therefore $S_1(n,k;r)=O_{\mathcal D,L}(1)$. Since
$|t|=O_{\mathcal D}(n^{-3/4})$ and $u$ is fixed, the two
$O(n^{-3/4})$ bounds follow.
\end{proof}

At the exact centre the two terms in $S_1$ are equal. This gives the next
corollary. The real refinement uses the sharper real-axis expansion from
Corollary~\ref{cor:real-Lambda}.

\begin{corollary}\label{cor:complex-center}
Fix an admissible set $\mathcal D$ and a real number $u$. For each
$(v,\delta)\in \mathcal D$, set
\[
r=e^{-t},\qquad
 t=\frac{v}{n}+i\frac{\delta}{n^{3/4}},
\]
and let $p$ satisfy $|p|=e^{-u/n}$ in the definition of $g$ in
\eqref{eq:intro-g}. Assume that $n$ is even and write $\kappa:=n/2$.
Then, uniformly for all $(v,\delta)\in \mathcal D$ and all sufficiently large
even $n$,
\begin{equation}\label{eq:complex-center}
\Delta^2 g(\kappa)
=
\frac{2u}{n}
-
2\Real\left(\frac{t}{e^{\kappa t}-1}\right)
+O_{\mathcal D}(n^{-3/2}).
\end{equation}
If $\delta=0$, then, uniformly for all $(v,0)\in \mathcal D$,
\begin{equation}\label{eq:complex-center-real}
\Delta^2 g(\kappa)
=
\frac{2u-A(v)}{n}
+O_{\mathcal D}(n^{-2}).
\end{equation}
\end{corollary}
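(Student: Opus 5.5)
The first formula \eqref{eq:complex-center} will come from specialising Theorem~\ref{thm:complex-curvature} to $k=\kappa$. Since $n$ is even, $n-\kappa=\kappa$, and the two terms in the definition \eqref{eq:S1-def} of $S_1$ coincide:
\[
S_1(n,\kappa;r)=\frac{2}{e^{\kappa t}-1}.
\]
The point $k=\kappa$ obviously satisfies $|k-n/2|\le Ln^{3/4}$ for any $L>0$; fix, say, $L=1$. Then \eqref{eq:complex-curvature-g} reads
\[
\Delta^2 g(\kappa)=\frac{2u}{n}-2\Real\!\left(\frac{t}{e^{\kappa t}-1}\right)+O_{\mathcal D}(n^{-3/2}),
\]
with constants uniform over $\mathcal D$ because those in Theorem~\ref{thm:complex-curvature} are. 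This gives \eqref{eq:complex-center} directly.

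For the real refinement \eqref{eq:complex-center-real} we have $\delta=0$, so $r=e^{-v/n}$ is real. Here I would not use the complex theorem, whose error term is only $O(n^{-3/2})$. Instead I would apply Corollary~\ref{cor:real-Lambda} with $z=v$ and $[z_0,z_1]=[v_{\min},v_{\max}]$ at $\ell=0$, which gives
\[
\log R(n,\kappa;e^{-v/n})=-\frac{A(v)}{n}+O_{\mathcal D}(n^{-2}),
\]
uniformly in $v$.

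For real $0<r<1$, each factor $1-r^j$ is positive, so $R>0$ and $\log|R|=\log R$. Corollary~\ref{cor:local-ratios} then gives
\[
\Delta^2g(\kappa)=\log R-2\log|p|=\log R+\frac{2u}{n},
\]
which yields \eqref{eq:complex-center-real}.

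As a consistency check, the leading term $2\Real(t/(e^{\kappa t}-1))$ with $t=v/n$ equals $\frac{2v}{n(e^{v/2}-1)}$. Since $\coth(v/4)-1=\frac{2}{e^{v/2}-1}$, this is exactly $A(v)/n$, so the two formulas agree; the real corollary simply supplies the sharper $O(n^{-2})$ remainder.

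The only point needing care is uniformity. In the first part it is inherited from Theorem~\ref{thm:complex-curvature} on the whole of $\mathcal D$; in the second it is inherited from Corollary~\ref{cor:real-Lambda} on the compact interval $[v_{\min},v_{\max}]$. I do not expect a substantive obstacle beyond this bookkeeping.
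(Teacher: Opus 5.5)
Your proposal is correct and follows the paper's proof essentially verbatim: set $k=\kappa$ in Theorem~\ref{thm:complex-curvature} so that $S_1(n,\kappa;r)=2/(e^{\kappa t}-1)$, and for $\delta=0$ combine Corollary~\ref{cor:real-Lambda} at $\ell=0$ with Corollary~\ref{cor:local-ratios} and $-2\log|p|=2u/n$. Your added remarks (positivity of $R$ for real $r$, so that $\log|R|=\log R$, and the consistency check $2v/(n(e^{v/2}-1))=A(v)/n$) are correct details that the paper leaves implicit.
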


\begin{proof}
Set $k=\kappa$ in Theorem~\ref{thm:complex-curvature}. Then
\[
S_1(n,\kappa;r)=\frac{2}{e^{\kappa t}-1},
\]
which gives \eqref{eq:complex-center}. If $\delta=0$, then Corollary~\ref{cor:real-Lambda}, with $\ell=0$, gives
\[
\log R(n,\kappa;e^{-v/n})=-\frac{A(v)}{n}+O_{\mathcal D}(n^{-2}).
\]
Together with Corollary~\ref{cor:local-ratios} and $-2\log|p|=2u/n$, this gives
\eqref{eq:complex-center-real}.
\end{proof}

Curvature gives second-order differences. For centred ratios it is more direct to
multiply adjacent ratios and then expand the resulting product. The next
lemma gives an exact logarithmic form of that product.

\begin{lemma}\label{lem:complex-middle-exact}
Assume that $n$ is even and write $\kappa:=n/2$. Let
$r=e^{-t}$ with $\Real t>0$. Let $0\le \ell\le \kappa$ and set
\begin{equation}\label{eq:Epm-def}
E_-(n,\ell;r):=e^{-(\kappa-\ell+1)t},
\qquad
E_0(n;r):=e^{-(\kappa+1)t},
\qquad
E_+(n,\ell;r):=e^{-(\kappa+\ell+1)t}.
\end{equation}
Then
\begin{equation}\label{eq:complex-middle-exact}
\log\left|\frac{\pqbinom{n}{\kappa+\ell}{r}}{\pqbinom{n}{\kappa}{r}}\right|
=
-\Real\sum_{m=1}^{\infty}
\frac{E_-(n,\ell;r)^m-2E_0(n;r)^m+E_+(n,\ell;r)^m}
{m\,(1-e^{-m t})}.
\end{equation}
The series converges absolutely.
\end{lemma}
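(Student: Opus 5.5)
The plan is to start from the product formula \eqref{eq:intro-middle-product}, take the modulus termwise, and expand each factor by the principal logarithm series already used in the proof of Proposition~\ref{prop:Lambda}. For $\ell=0$ both sides of \eqref{eq:complex-middle-exact} vanish, since $E_-=E_0=E_+$. So assume $1\le\ell\le\kappa$.

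\emph{Step 1: reduce to single factors.} By \eqref{eq:intro-rho} with $k=\kappa+j$ and $n-k=\kappa-j$,
\[
\frac{\pqbinom{n}{\kappa+\ell}{r}}{\pqbinom{n}{\kappa}{r}}
=\prod_{j=0}^{\ell-1}\frac{1-r^{\kappa-j}}{1-r^{\kappa+j+1}}.
\]
For $0\le j\le\ell-1\le\kappa-1$, every exponent $N$ that occurs satisfies $N\ge1$. Since $|r|=e^{-\Real t}<1$, we have $|r^N|<1$, so all factors are non-zero. For each such $N$ the principal logarithm satisfies $\log(1-r^N)=-\sum_{m\ge1}r^{mN}/m$, and $\log|1-r^N|=\Real\log(1-r^N)$. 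Taking logarithms of moduli in the finite product therefore gives
\[
\log\left|\frac{\pqbinom{n}{\kappa+\ell}{r}}{\pqbinom{n}{\kappa}{r}}\right|
=-\Real\sum_{j=0}^{\ell-1}\sum_{m=1}^{\infty}\frac{r^{m(\kappa-j)}-r^{m(\kappa+j+1)}}{m}.
\]
This route needs no choice of branch beyond the principal one on the right half-plane.

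\emph{Step 2: interchange and sum the geometric series.} The $j$-sum is finite and each $m$-series converges absolutely, so the two sums may be swapped. For fixed $m$, using $r^{m}=e^{-mt}\ne1$, the two geometric sums are
\[
\sum_{j=0}^{\ell-1}r^{m(\kappa-j)}=\frac{r^{m(\kappa-\ell+1)}-r^{m(\kappa+1)}}{1-r^m}=\frac{E_-^m-E_0^m}{1-e^{-mt}}
\]
and
\[
\sum_{j=0}^{\ell-1}r^{m(\kappa+j+1)}=\frac{E_0^m-E_+^m}{1-e^{-mt}}.
\]
Subtracting the second from the first gives exactly the summand $(E_-^m-2E_0^m+E_+^m)/(m(1-e^{-mt}))$ in \eqref{eq:complex-middle-exact}.

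\emph{Step 3: absolute convergence.} This can be checked directly from the stated series. Since $|1-e^{-mt}|\ge1-e^{-m\Real t}\ge1-e^{-\Real t}>0$, and $\kappa-\ell+1\ge1$, the numerator is bounded by $4e^{-m\Real t}$. The $m$-th term is thus $O(e^{-m\Real t}/m)$, which is summable. Alternatively, convergence follows because the series is a finite sum of absolutely convergent series.

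The only point requiring care, and it is mild, is the bookkeeping of the geometric sum endpoints, so that the middle term $E_0=e^{-(\kappa+1)t}$ appears with coefficient $-2$.
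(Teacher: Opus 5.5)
Your proposal is correct and follows essentially the same route as the paper's proof. You use the same product of adjacent ratios, the same principal-logarithm expansion of $\log|1-w|$, the same geometric summation in $j$, and the same termwise bound for absolute convergence. The only cosmetic difference is that you split the geometric sum into the two pieces $E_-^m-E_0^m$ and $E_0^m-E_+^m$, whereas the paper writes the combined form $r^{m(\kappa-\ell+1)}(1-r^{m\ell})^2/(1-r^m)$.
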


\begin{proof}
Since $\Real t>0$, one has $|r|<1$. The case $\ell=0$ gives zero on both
sides, so assume first that $1\le \ell\le\kappa$. From \eqref{eq:intro-rho},
\[
\rho_k(r)=\frac{1-r^{n-k}}{1-r^{k+1}},
\]
and therefore
\[
\frac{\pqbinom{n}{\kappa+\ell}{r}}{\pqbinom{n}{\kappa}{r}}
=
\prod_{s=0}^{\ell-1}\rho_{\kappa+s}(r)
=
\prod_{j=1}^{\ell}\frac{1-r^{\kappa-\ell+j}}{1-r^{\kappa+j}}.
\]
For $|w|<1$,
\[
\log|1-w|=-\Real\sum_{m=1}^{\infty}\frac{w^m}{m}.
\]
Applying this identity to each factor gives
\[
\log\left|\frac{\pqbinom{n}{\kappa+\ell}{r}}{\pqbinom{n}{\kappa}{r}}\right|
=
-\Real\sum_{m=1}^{\infty}\frac{1}{m}
\sum_{j=1}^{\ell}\left(r^{m(\kappa-\ell+j)}-r^{m(\kappa+j)}\right).
\]
The inner sum is
\[
\sum_{j=1}^{\ell}r^{m(\kappa-\ell+j)}
-
\sum_{j=1}^{\ell}r^{m(\kappa+j)}
=
\frac{r^{m(\kappa-\ell+1)}(1-r^{m\ell})^2}{1-r^m}
=
\frac{E_-^m-2E_0^m+E_+^m}{1-e^{-m t}},
\]
which proves \eqref{eq:complex-middle-exact}.

For absolute convergence, set
\[
\sigma:=|r|^{\kappa-\ell+1}<1.
\]
Then
\[
|E_-(n,\ell;r)|,\ |E_0(n;r)|,\ |E_+(n,\ell;r)|\le \sigma,
\qquad
|1-r^m|\ge 1-|r|^m\ge 1-|r|.
\]
Therefore
\[
\left|\frac{E_-^m-2E_0^m+E_+^m}{m(1-e^{-m t})}\right|
\le
\frac{4\sigma^m}{m(1-|r|)},
\]
and $\sum_{m\ge1}\sigma^m/m$ converges. This proves absolute convergence.
\end{proof}

We next keep one further term in the expansion of
$(1-e^{-m t})^{-1}$. This adds the series $\Phi_0$ to the
$\Phi_1$- and $\Phi_2$-terms.

\begin{theorem}\label{thm:complex-middle}
Fix an admissible set $\mathcal D$, a constant $L>0$, and a real number $u$.
Assume that $n$ is even and write $\kappa:=n/2$. Let
$r=e^{-t}$ with $t$ as in \eqref{eq:intro-r-exponent} and
$(v,\delta)\in \mathcal D$, and let $p$ satisfy $|p|=e^{-u/n}$ in the definition
of $g$ in \eqref{eq:intro-g}. Then, uniformly for all $(v,\delta)\in \mathcal D$,
all sufficiently large even $n$, and every integer $\ell$ with
$|\ell|\le L n^{3/4}$,
\begin{align}
\log\left|\frac{\pqbinom{n}{\kappa+\ell}{r}}{\pqbinom{n}{\kappa}{r}}\right|
&=
-\Real\left(
\frac{\Phi_2(n,|\ell|;r)}{t}
+\frac{1}{2}\,\Phi_1(n,|\ell|;r)
+\frac{t}{12}\,\Phi_0(n,|\ell|;r)
\right)
\nonumber\\
&\quad
+O_{\mathcal D,L}(n^{-9/4}),\label{eq:complex-middle-main}\\[1mm]
g(\kappa+\ell)-g(\kappa)
&=
\frac{u}{n}\,\ell^2
-\Real\left(
\frac{\Phi_2(n,|\ell|;r)}{t}
+\frac{1}{2}\,\Phi_1(n,|\ell|;r)
+\frac{t}{12}\,\Phi_0(n,|\ell|;r)
\right)
\nonumber\\
&\quad
+O_{\mathcal D,L}(n^{-9/4}),\label{eq:complex-middle-g}
\end{align}
where
\begin{align}
\Phi_0(n,\ell;r)
&:=
\sum_{m=1}^{\infty}
\left(E_-(n,\ell;r)^m-2E_0(n;r)^m+E_+(n,\ell;r)^m\right),\label{eq:Phi0-def}\\[1mm]
\Phi_1(n,\ell;r)
&:=
\sum_{m=1}^{\infty}\frac{E_-(n,\ell;r)^m-2E_0(n;r)^m+E_+(n,\ell;r)^m}{m},\label{eq:Phi1-def}\\[1mm]
\Phi_2(n,\ell;r)
&:=
\sum_{m=1}^{\infty}\frac{E_-(n,\ell;r)^m-2E_0(n;r)^m+E_+(n,\ell;r)^m}{m^2}.\label{eq:Phi2-def}
\end{align}
The three series converge absolutely, and
\begin{align}
\Phi_0(n,\ell;r)
&=
\frac{E_-(n,\ell;r)}{1-E_-(n,\ell;r)}
-2\,\frac{E_0(n;r)}{1-E_0(n;r)}
+\frac{E_+(n,\ell;r)}{1-E_+(n,\ell;r)},\label{eq:Phi0-rational}\\[1mm]
\Phi_1(n,\ell;r)
&=
-\log\left(1-E_-(n,\ell;r)\right)
+2\log\left(1-E_0(n;r)\right)
-\log\left(1-E_+(n,\ell;r)\right),\label{eq:Phi1-log}\\[1mm]
\Phi_2(n,\ell;r)
&=
\Li_2\left(E_-(n,\ell;r)\right)
-2\Li_2\left(E_0(n;r)\right)
+\Li_2\left(E_+(n,\ell;r)\right).\label{eq:Phi2-dilog}
\end{align}
Here the logarithms in \eqref{eq:Phi1-log} are principal logarithms.
\end{theorem}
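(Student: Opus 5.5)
The plan is to start from the exact series in Lemma~\ref{lem:complex-middle-exact} and expand the factor $(1-e^{-mt})^{-1}$ in $t$ term by term. Two preliminary reductions come first.

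\textbf{Reductions.} By the coefficient symmetry $\pqbinom{n}{k}{r}=\pqbinom{n}{n-k}{r}$ (proof of Corollary~\ref{cor:selfreciprocal}), the ratio at $\kappa-\ell$ equals the ratio at $\kappa+\ell$. So it suffices to treat $0\le\ell\le Ln^{3/4}$, which explains the $|\ell|$ in the statement. For \eqref{eq:complex-middle-g}, Lemma~\ref{lem:pq-factor} gives
\[
g(\kappa+\ell)-g(\kappa)=\bigl((\kappa+\ell)(\kappa-\ell)-\kappa^2\bigr)\log|p|+\log\Bigl|\pqbinom{n}{\kappa+\ell}{r}\Big/\pqbinom{n}{\kappa}{r}\Bigr|,
\]
and $-\ell^2\log|p|=u\ell^2/n$. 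Hence \eqref{eq:complex-middle-g} follows from \eqref{eq:complex-middle-main}.

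\textbf{Closed forms and convergence.} For large $n$ we have $\kappa-\ell+1\ge n/4$. Therefore $|E_\pm|,|E_0|\le\sigma:=e^{-v_{\min}/4}<1$, and all three series converge absolutely. The closed forms \eqref{eq:Phi0-rational}--\eqref{eq:Phi2-dilog} are then the geometric series, the series $-\log(1-w)=\sum_m w^m/m$, and $\Li_2(w)=\sum_m w^m/m^2$, each valid for $|w|<1$. The logarithm is principal because $1-w$ lies in the right half-plane.

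\textbf{Main expansion.} Write $N_m:=E_-^m-2E_0^m+E_+^m=E_-^m(1-r^{m\ell})^2$, as in the proof of Lemma~\ref{lem:complex-middle-exact}. The Bernoulli expansion gives
\[
\frac{1}{1-e^{-x}}=\frac1x+\frac12+\frac{x}{12}+\varepsilon(x),\qquad |\varepsilon(x)|\le C|x|^3\quad(|x|\le\pi),
\]
since the function is meromorphic with nearest poles at $\pm2\pi i$. Insert this into \eqref{eq:complex-middle-exact} with $x=mt$ for $m\le M_n:=\lfloor \pi/|t|\rfloor\asymp n^{3/4}$. Summed over all $m\ge1$, the three main terms produce exactly $\Phi_2/t+\Phi_1/2+t\Phi_0/12$.

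Three error contributions remain.
\begin{itemize}
\item[(i)] The Taylor remainder for $m\le M_n$ is bounded by $\sum_m |N_m|\,m^{-1}\,C m^3|t|^3$. Using $|1-r^{m\ell}|\le m\ell|t|e^{0}\le m\ell|t|$ (valid since $\Real t>0$) together with $\ell|t|=O_{\mathcal D,L}(1)$, we get $|N_m|\le C\sigma^m m^2$. This gives $O(|t|^3\sum m^4\sigma^m)=O(n^{-9/4})$.
\item[(ii)] The tail $m>M_n$ of the exact series. For $m\ge M_n$ we have $|e^{-mt}|\le e^{-cv_{\min}n^{-1/4}}$, so $|1-e^{-mt}|\ge c'n^{-1/4}$. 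The terms are then at most $Cn^{1/4}\sigma^m$, whose sum over $m>M_n$ is $e^{-cn^{3/4}}$.
\item[(iii)] The tails $m>M_n$ of the three main-term series are bounded the same way, using $|1/(mt)|\le1$ there.
\end{itemize}

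\textbf{Main obstacle.} The delicate point is exactly this split at $|mt|\asymp1$. The Taylor expansion of $(1-e^{-mt})^{-1}$ fails once $|m\Imag t|$ approaches $2\pi$, and near-resonances $e^{-mt}\approx1$ must be ruled out for the large $m$. The saving factor $\sigma^{m}$, with $\sigma$ fixed below $1$, makes both regimes harmless. The other essential ingredient is the bound $|N_m|=O(m^2\sigma^m)$, coming from the double zero of $(1-r^{m\ell})^2$; it is what keeps the error at $|t|^3$ times a constant, rather than larger powers of $\ell$.
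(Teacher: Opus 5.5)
Your argument is correct and is essentially the paper's proof. Both expand $(1-e^{-mt})^{-1}$ termwise in the exact series of Lemma~\ref{lem:complex-middle-exact}, bound the cubic Taylor remainder using the geometric decay $\sigma^m$, and show that all tails are exponentially small. The only difference is the cutoff: the paper cuts at $M=\lfloor n^{1/8}\rfloor$, so that $M|t|\to0$, only analyticity near $0$ is needed, and the tails are controlled by $|1-e^{-mt}|\ge c_{\mathcal D}\min(m/n,1)$; you cut at $\pi/|t|$ and use the radius of convergence $2\pi$.

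One correction to your closing remark: the double-zero bound $|N_m|=O(m^2\sigma^m)$ is not an essential ingredient. The trivial bound $|N_m|\le 4\sigma^m$, which the paper uses, already gives a remainder $O\bigl(|t|^3\sum_m m^2\sigma^m\bigr)=O(n^{-9/4})$. Moreover, because $\ell|t|$ can be of order one, the factor $(1-r^{m\ell})^2$ gains nothing uniformly in $\ell$.
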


\begin{proof}
The one-parameter coefficients satisfy
$\pqbinom{n}{\kappa+\ell}{r}=\pqbinom{n}{\kappa-\ell}{r}$, and hence
$g(\kappa+\ell)=g(\kappa-\ell)$. It is therefore enough to consider
$0\le \ell\le L n^{3/4}$. For all sufficiently large $n$ this range is
contained in $0\le\ell\le\kappa$.
Let
\[
\sigma:=e^{-v_{\min}/4}<1.
\]
For all sufficiently large $n$ and all such $\ell$ one has
$\kappa-\ell+1\ge n/4$, hence
\[
|E_-(n,\ell;r)|,\ |E_0(n;r)|,\ |E_+(n,\ell;r)|\le \sigma.
\]
In particular, the series defining $\Phi_0$, $\Phi_1$, and $\Phi_2$ converge
absolutely.

Choose
\[
M:=\lfloor n^{1/8}\rfloor.
\]
Since $|t|=O_{\mathcal D}(n^{-3/4})$, one has
$M|t|=O_{\mathcal D}(n^{-5/8})$. The function
\[
x\longmapsto
\frac{1}{1-e^{-x}}-\frac1x-\frac12-\frac{x}{12}
\]
is analytic at $x=0$. Therefore, uniformly for $1\le m\le M$,
\begin{equation}\label{eq:inv-den-expansion}
\frac{1}{1-e^{-m t}}
=
\frac{1}{m t}+\frac12+\frac{m t}{12}
+O_{\mathcal D}(m^3|t|^3).
\end{equation}

Set
\[
c_m:=E_-(n,\ell;r)^m-2E_0(n;r)^m+E_+(n,\ell;r)^m.
\]
By Lemma~\ref{lem:complex-middle-exact},
\[
\log\left|\frac{\pqbinom{n}{\kappa+\ell}{r}}{\pqbinom{n}{\kappa}{r}}\right|
=
-\Real\sum_{m=1}^{\infty}\frac{c_m}{m(1-e^{-m t})}.
\]
For $m\le M$, insert \eqref{eq:inv-den-expansion}. Since $|c_m|\le4\sigma^m$,
\[
\sum_{m=1}^{M}
\left|\frac{c_m}{m}\,O_{\mathcal D}(m^3|t|^3)\right|
\le
4|t|^3\sum_{m=1}^{\infty}m^2\sigma^m
=
O_{\mathcal D,L}(n^{-9/4}).
\]
Hence
\[
\sum_{m=1}^{M}\frac{c_m}{m(1-e^{-m t})}
=
\frac{1}{t}\sum_{m=1}^{M}\frac{c_m}{m^2}
+
\frac12\sum_{m=1}^{M}\frac{c_m}{m}
+
\frac{t}{12}\sum_{m=1}^{M}c_m
+O_{\mathcal D,L}(n^{-9/4}).
\]

It remains to estimate the remaining part. For $m\ge1$,
\[
|1-e^{-m t}|
\ge 1-e^{-m v/n}
\ge c_{\mathcal D}\min\left(\frac{m}{n},1\right),
\]
where $c_{\mathcal D}>0$ depends only on $v_{\min}$. Therefore
\[
\sum_{m=M+1}^{\infty}
\left|\frac{c_m}{m(1-e^{-m t})}\right|
\le
C_{\mathcal D}\sum_{m=M+1}^{\infty}
\frac{\sigma^m}{m\min(m/n,1)}
=
O_{\mathcal D}\!\left(e^{-c_{\mathcal D} n^{1/8}}\right).
\]
Also,
\[
\sum_{m=M+1}^{\infty}|c_m|
+
\sum_{m=M+1}^{\infty}\left|\frac{c_m}{m}\right|
\le
C_{\mathcal D}\sum_{m=M+1}^{\infty}\sigma^m
=
O_{\mathcal D}\!\left(e^{-c_{\mathcal D} n^{1/8}}\right),
\]
and, since $|t|\ge v_{\min}/n$,
\[
\frac{1}{|t|}
\sum_{m=M+1}^{\infty}\left|\frac{c_m}{m^2}\right|
\le
\frac{4}{|t|}\sum_{m=M+1}^{\infty}\frac{\sigma^m}{m^2}
=
O_{\mathcal D}\!\left(e^{-c_{\mathcal D} n^{1/8}}\right).
\]
The exponential bounds absorb the polynomial factors in $n$. Combining the
estimates proves \eqref{eq:complex-middle-main}. The formula
\eqref{eq:complex-middle-g} follows from
\[
g(\kappa+\ell)-g(\kappa)
=
\log\left|\frac{\pqbinom{n}{\kappa+\ell}{r}}{\pqbinom{n}{\kappa}{r}}\right|
+\frac{u}{n}\,\ell^2,
\]
which is the contribution of the factor $p^{k(n-k)}$.
Finally, \eqref{eq:Phi0-rational}, \eqref{eq:Phi1-log}, and
\eqref{eq:Phi2-dilog} follow from the absolutely convergent power series for
$w/(1-w)$, $-\log(1-w)$, and $\Li_2(w)$ on $|w|<1$.
\end{proof}

The following symmetric Green identity converts second-order differences to
centred ratios. It is used in Theorem~\ref{thm:complex-sqrt} and again in
Section~\ref{sec:small-imaginary-perturbation}.

\begin{lemma}\label{lem:green}
Let $f:\mathbb Z\to\mathbb C$ and define
\[
(\Delta^2 f)(k):=f(k+1)-2f(k)+f(k-1).
\]
Fix an integer $\kappa$.
Then for every integer $m\ge1$,
\begin{equation}\label{eq:green-symmetric}
f(\kappa+m)+f(\kappa-m)-2f(\kappa)
=
\sum_{j=-(m-1)}^{m-1}(m-|j|)\,\Delta^2 f(\kappa+j).
\end{equation}
\end{lemma}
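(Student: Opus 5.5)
We argue by induction on $m\ge1$, which keeps everything finite and purely algebraic; no property of $f$ beyond being defined on $\mathbb Z$ is used. Write
\[
D(m):=f(\kappa+m)+f(\kappa-m)-2f(\kappa),
\qquad
S(m):=\sum_{j=-(m-1)}^{m-1}(m-|j|)\,\Delta^2 f(\kappa+j).
\]
For $m=1$ the sum contains only $j=0$ with weight $1$, so $S(1)=\Delta^2 f(\kappa)=f(\kappa+1)-2f(\kappa)+f(\kappa-1)=D(1)$.

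For the inductive step we compare consecutive values. On the left,
\[
D(m+1)-D(m)=\bigl(f(\kappa+m+1)-f(\kappa+m)\bigr)-\bigl(f(\kappa-m)-f(\kappa-m-1)\bigr).
\]
On the right, the weight $m+1-|j|$ exceeds $m-|j|$ by exactly $1$ for $|j|\le m-1$, and the new indices $j=\pm m$ enter with weight $1$. Hence
\[
S(m+1)-S(m)=\sum_{j=-m}^{m}\Delta^2 f(\kappa+j).
\]
This is a telescoping sum of first differences. Writing $\Delta^2 f(k)=\nabla f(k+1)-\nabla f(k)$ with $\nabla f(k):=f(k)-f(k-1)$, it equals $\nabla f(\kappa+m+1)-\nabla f(\kappa-m)$, which is exactly $D(m+1)-D(m)$. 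Together with $D(1)=S(1)$, this completes the induction.

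Nothing here is a real obstacle. The only care needed is the bookkeeping of the weights at the boundary indices $j=\pm m$ in $S(m+1)-S(m)$, and checking the index shift in the telescoping step. Alternatively, the identity follows from the symmetric discrete double-summation formula $f(\kappa+m)-f(\kappa)-m\nabla f(\kappa+1)=\sum_{j=1}^{m-1}(m-j)\Delta^2 f(\kappa+j)$ and its mirror image, added together with the $j=0$ term. The induction route avoids tracking those first-order terms, so it is the one we use.
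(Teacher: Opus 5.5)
Your proof is correct and is essentially the paper's argument: the increment $D(m+1)-D(m)=\sum_{j=-m}^{m}\Delta^2 f(\kappa+j)$ that drives your induction is exactly the paper's telescoping identity $\Delta f(\kappa+u)-\Delta f(\kappa-u-1)=\sum_{j=-u}^{u}\Delta^2 f(\kappa+j)$ at $u=m$, with forward differences $\Delta f(k)=f(k+1)-f(k)$. The paper sums these increments over $u=0,\dots,m-1$ and exchanges the order of summation to read off the weights $m-|j|$, whereas you check the weight increments one step at a time by induction.
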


\begin{proof}
Set $\Delta f(k):=f(k+1)-f(k)$.
Then
\[
f(\kappa+m)-f(\kappa)=\sum_{u=0}^{m-1}\Delta f(\kappa+u),
\qquad
f(\kappa-m)-f(\kappa)=-\sum_{u=1}^{m}\Delta f(\kappa-u).
\]
Adding these two identities gives
\[
f(\kappa+m)+f(\kappa-m)-2f(\kappa)
=
\sum_{u=0}^{m-1}\left(\Delta f(\kappa+u)-\Delta f(\kappa-u-1)\right).
\]
For each $u\ge0$,
\[
\Delta f(\kappa+u)-\Delta f(\kappa-u-1)
=
\sum_{j=-u}^{u}\Delta^2 f(\kappa+j)
\]
by telescoping.
Summing over $u=0,\dots,m-1$ and exchanging the order of summation, the coefficient of $\Delta^2 f(\kappa+j)$ is the number of integers $u\in\{0,\dots,m-1\}$ with $|j|\le u$, namely $m-|j|$.
This proves \eqref{eq:green-symmetric}.
\end{proof}

We also give a complex estimate for deviations of order $\sqrt n$. It follows
from the second expansion in Proposition~\ref{prop:complex-local} and the
same Green identity.

\begin{theorem}\label{thm:complex-sqrt}
Let $z>0$, $\alpha\in\mathbb R$, and
\[
r=\exp(-t),
\qquad
t:=\frac zn+i\frac{\alpha}{n^{3/4}}.
\]
Assume that $n$ is even and write $\kappa:=n/2$. Put
\[
\tau:=z+i\alpha n^{1/4},
\qquad
w:=\frac{\tau}{2},
\]
and define
\[
F(w):=\frac{1}{e^w-1},
\qquad
G(w):=\frac{e^w}{(e^w-1)^2},
\qquad
H(w):=\frac{e^w(e^w+1)}{(e^w-1)^3}.
\]
For each fixed $L>0$, uniformly for all integers $\ell$ with
$|\ell|\le L\sqrt n$,
\begin{align}
\log\left|\frac{\pqbinom{n}{\kappa+\ell}{r}}{\pqbinom{n}{\kappa}{r}}\right|
&=
-\ell^2\Real\left(tF(w)\right)
+\frac{\ell^2}{2}\Real\left(t^2G(w)\right)
-\frac{\ell^2(\ell^2-1)}{12}\Real\left(t^3H(w)\right)
\nonumber\\
&\quad
+O_{z,\alpha,L}\!\left(n^3|t|^5+n^2|t|^4+n|t|^3\right).
\label{eq:complex-sqrt}
\end{align}
Equivalently, writing $x=\ell/\sqrt n$, one has
\begin{align}
\log\left|\frac{\pqbinom{n}{\kappa+\ell}{r}}{\pqbinom{n}{\kappa}{r}}\right|
&=
-x^2 C_n(z,\alpha)
+\frac{x^2}{2n}\Real\left(\tau^2G(w)\right)
-\frac{x^4}{12n}\Real\left(\tau^3H(w)\right)
\nonumber\\
&\quad
+O_{z,\alpha,L}\!\left(n^3|t|^5+n^2|t|^4+n|t|^3\right),
\label{eq:complex-sqrt-x}
\end{align}
where
\begin{equation}\label{eq:Cn-def}
C_n(z,\alpha):=
\Real\left(\tau F(w)\right)
=
\Real\left(
\frac{z+i\alpha n^{1/4}}{e^{z/2+i\alpha n^{1/4}/2}-1}
\right).
\end{equation}
\end{theorem}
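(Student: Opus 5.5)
Taking real parts and using \eqref{eq:Lambda-real}, the second expansion \eqref{eq:complex-second-order} of Proposition~\ref{prop:complex-local} gives, for $|j|\le L\sqrt n$,
\[
\log|R(n,\kappa+j;r)|=\Real\Bigl(-tS_1(n,\kappa+j;r)+\tfrac{t^2}{2}S_2(n,\kappa+j;r)\Bigr)+O(|t|^3).
\]
The plan is to feed this into the symmetric Green identity of Lemma~\ref{lem:green}. Take $f(k)=\log|\pqbinom{n}{k}{r}|$, so that $\Delta^2f(k)=\log|R(n,k;r)|$, and fix $m=\ell\ge0$. The symmetry $f(\kappa+\ell)=f(\kappa-\ell)$ gives
\[
2\bigl(f(\kappa+\ell)-f(\kappa)\bigr)=\sum_{|j|<\ell}(\ell-|j|)\,\Delta^2f(\kappa+j).
\]
The weights satisfy $\sum_{|j|<\ell}(\ell-|j|)=\ell^2$ and $\sum_{|j|<\ell}(\ell-|j|)j^2=\ell^2(\ell^2-1)/6$. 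The odd moments vanish.

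Next I expand $S_1$ and $S_2$ in $j$. Put $\phi(y):=1/(e^y-1)$. Then $S_1(n,\kappa+j;r)=\phi(w+jt)+\phi(w-jt)$, since $\kappa t=\tau/2=w$. By evenness,
\[
S_1=2F(w)+F''(w)\,j^2t^2+O(|jt|^4).
\]
Here $\phi'=-G$ and $\phi''=-G'=H$; I will check the last identity directly from $G=e^w/(e^w-1)^2$. Similarly $S_2=2G(w)+O(|jt|^2)$. These Taylor remainders need uniform control of the derivatives of $\phi$ on the segment $w\pm jt$. This is available because $\Real(w\pm jt)\ge v/4$ for large $n$, so $|e^{y}-1|\ge e^{v/4}-1$ there. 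The imaginary part of $w$ grows like $n^{1/4}$, but that is harmless: $\phi$ and all its derivatives are bounded on $\Real y\ge c>0$, uniformly in $\Imag y$. This uniformity is the key point, and I expect it to need the most care.

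Substituting into the Green identity, the terms $-2t\Real F$, $t^2\Real G$ and $-t^3H j^2$ against the weights give
\[
-\ell^2\Real(tF)+\tfrac{\ell^2}{2}\Real(t^2G)-\tfrac{\ell^2(\ell^2-1)}{12}\Real(t^3H).
\]
This uses $2(f(\kappa+\ell)-f(\kappa))$ and then division by $2$.

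It remains to account for the error terms, with $|j|\le\ell\le L\sqrt n$. The $O(|t|^3)$ term, multiplied by the total weight $\ell^2\le L^2n$, gives $n|t|^3$. The quartic remainder $|t|\,|jt|^4$ against the weights contributes at most $\ell^6|t|^5\lesssim n^3|t|^5$. The remainder $|t|^2|jt|^2$ coming from $S_2$ gives $\ell^4|t|^4\lesssim n^2|t|^4$. This is exactly $\mathcal E_n$. For negative $\ell$ the symmetry handles the claim.

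Finally, the $x$-form follows by substituting $\ell=x\sqrt n$ and $t=\tau/n$. The replacement of $\ell^2(\ell^2-1)$ by $\ell^4$ costs $\ell^2|t|^3\lesssim n|t|^3$, which is absorbed. This yields \eqref{eq:complex-sqrt-x} with $C_n=\Real(\tau F(w))$.
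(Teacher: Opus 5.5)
Your proposal is correct and follows essentially the same route as the paper. You rewrite the second-order expansion of Proposition~\ref{prop:complex-local} as $F(w\pm jt)$ and $G(w\pm jt)$ using $\kappa t=w$, expand symmetrically with $F''=H$ on the strip $\Real y\ge z/4$, and sum against the weights $\ell-|j|$ from Lemma~\ref{lem:green}, with the same error bookkeeping (the bound you wrote as $v/4$ should read $z/4$ in this theorem's notation, and the case $\ell=0$ is trivial rather than an instance of the Green identity).
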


\begin{proof}
The quantity on the left of \eqref{eq:complex-sqrt} is even in $\ell$, and
the right-hand side has the same symmetry. It is therefore enough to consider
$0\le \ell\le L\sqrt n$. The case $\ell=0$ is immediate.
Set
\[
f_n(k):=\log \left|\pqbinom{n}{k}{r}\right|.
\]
Then
\[
\Delta^2 f_n(k)=\log|R(n,k;r)|.
\]
For $|j|\le L\sqrt n$ and all sufficiently large $n$,
\[
\frac z4\le \Real(w\pm jt)\le \frac{3z}{4}.
\]
On this vertical strip the required derivatives of $F$ and $G$ are bounded,
with constants depending only on $z$, $\alpha$, and $L$. Since $F''=H$,
Taylor's formula gives
\[
F(w+jt)+F(w-jt)
=
2F(w)+j^2t^2H(w)+O_{z,\alpha,L}(j^4|t|^4)
\]
and
\[
G(w+jt)+G(w-jt)
=
2G(w)+O_{z,\alpha,L}(j^2|t|^2).
\]
The second expansion in Proposition~\ref{prop:complex-local}, applied to
the fixed pair $(z,\alpha)$, gives
\begin{align*}
\Delta^2 f_n(\kappa+j)
&=
-\Real\left[
t\left(F(w+jt)+F(w-jt)\right)
\right]\\
&\quad+
\frac12\Real\left[
t^2\left(G(w+jt)+G(w-jt)\right)
\right]
+O_{z,\alpha,L}(|t|^3).
\end{align*}
Hence
\begin{align*}
\Delta^2 f_n(\kappa+j)
&=
-2\Real\left(tF(w)\right)
+\Real\left(t^2G(w)\right)
-j^2\Real\left(t^3H(w)\right)\\
&\quad
+O_{z,\alpha,L}\!\left(j^4|t|^5+j^2|t|^4+|t|^3\right)
\end{align*}
uniformly for $|j|\le L\sqrt n$.

Lemma~\ref{lem:green}, applied to $f_n$, gives
\[
2\left(f_n(\kappa+\ell)-f_n(\kappa)\right)
=
\sum_{j=-(\ell-1)}^{\ell-1}(\ell-|j|)\Delta^2 f_n(\kappa+j).
\]
Using
\[
\sum_{j=-(\ell-1)}^{\ell-1}(\ell-|j|)=\ell^2,
\qquad
\sum_{j=-(\ell-1)}^{\ell-1}(\ell-|j|)j^2
=
\frac{\ell^2(\ell^2-1)}6,
\]
and
\[
\sum_{j=-(\ell-1)}^{\ell-1}(\ell-|j|)j^4=O(\ell^6),
\]
we obtain
\begin{align*}
f_n(\kappa+\ell)-f_n(\kappa)
&=
-\ell^2\Real\left(tF(w)\right)
+\frac{\ell^2}{2}\Real\left(t^2G(w)\right)\\
&\quad
-\frac{\ell^2(\ell^2-1)}{12}\Real\left(t^3H(w)\right)\\
&\quad
+O_{z,\alpha,L}\!\left(\ell^6|t|^5+\ell^4|t|^4+\ell^2|t|^3\right).
\end{align*}
Since $\ell=O_L(\sqrt n)$, this proves \eqref{eq:complex-sqrt}. Replacing
$t$ by $\tau/n$ and $\ell$ by $x\sqrt n$ gives
\eqref{eq:complex-sqrt-x}; the difference between
$\ell^2(\ell^2-1)t^3/12$ and $x^4\tau^3/(12n)$ is absorbed by the
error term.
\end{proof}

\begin{remark}
The preceding theorem gives a direct comparison with the real critical value
for the full $(p,q)$-coefficient. Assume that $|p|=e^{-u/n}$ and let
$g(k)=\log\left|\pqbinom{n}{k}{p,q}\right|$. If $|x|\le L$ with
$x=\ell/\sqrt n$ and fixed $L$, then \eqref{eq:complex-sqrt-x} and the
factor $p^{k(n-k)}$ give
\begin{align}
g(\kappa+\ell)-g(\kappa)
&=
x^2\left(u-C_n(z,\alpha)\right)
+\frac{x^2}{2n}\Real\left(\tau^2G(w)\right)
-\frac{x^4}{12n}\Real\left(\tau^3H(w)\right)
\nonumber\\
&\quad
+O_{z,\alpha,L}\!\left(n^3|t|^5+n^2|t|^4+n|t|^3\right).
\label{eq:complex-sqrt-full-modulus}
\end{align}
For $\alpha=0$,
\[
C_n(z,0)=\frac{z}{e^{z/2}-1}=\frac{A(z)}2.
\]
Thus the choice $u=A(z)/2$ makes the first term in
\eqref{eq:complex-sqrt-full-modulus} vanish when $r$ is real. If
$\alpha\ne0$, write $b=\alpha n^{1/4}/2$. Then
\begin{equation}\label{eq:Cn-explicit}
C_n(z,\alpha)=
\frac{z(e^{z/2}\cos b-1)+\alpha n^{1/4}e^{z/2}\sin b}
{e^z-2e^{z/2}\cos b+1}.
\end{equation}
Hence the same choice $u=A(z)/2$ gives the coefficient
$A(z)/2-C_n(z,\alpha)$, which depends on $n$ when $\alpha\ne0$.
\end{remark}

\section{An imaginary perturbation of order $n^{-5/4}$ at the real critical value}\label{sec:small-imaginary-perturbation}

The main complex theorem treats $\Imag t$ of order $n^{-3/4}$ in $r=q/p$.
For comparison with the positive real critical value, we also treat an
imaginary perturbation of order $n^{-5/4}$.  For $|\ell|=O(n^{3/4})$ this perturbation changes
the quadratic coefficient but leaves the quartic coefficient unchanged.

\begin{proposition}\label{prop:small-imaginary-perturbation}
Let $z>0$, let $\alpha,\gamma\in\mathbb R$, and assume that $n$ is even. Put
$\kappa:=n/2$ and
\[
t:=\frac zn+i\frac{\alpha}{n^{5/4}},
\qquad
r:=e^{-t},
\qquad
u:=\frac{A(z)}2+\frac{\gamma}{\sqrt n},
\]
\[
p:=e^{-u/n},
\qquad
q:=p r.
\]
For every fixed $L>0$, uniformly for all integer sequences $\ell_n$ such that
$x_n:=\ell_n/n^{3/4}$ satisfies $|x_n|\le L$,
\begin{equation}\label{eq:small-imaginary-perturbation}
\left|
\frac{\pqbinom{n}{\kappa+\ell_n}{p,q}}
{\pqbinom{n}{\kappa}{p,q}}
\right|
=
\exp\!\left(
\left(\gamma+\frac{\alpha^2}{4}A''(z)\right)x_n^2
-\frac{B(z)}{12}x_n^4
+O_{z,\alpha,\gamma,L}(n^{-1/2})\right).
\end{equation}
The same formula holds with $\kappa-\ell_n$ in place of $\kappa+\ell_n$.
In particular, if $x_n\to x$, then
\[
\left|
\frac{\pqbinom{n}{\kappa\pm \ell_n}{p,q}}
{\pqbinom{n}{\kappa}{p,q}}
\right|
\longrightarrow
\exp\!\left(
\left(\gamma+\frac{\alpha^2}{4}A''(z)\right)x^2
-\frac{B(z)}{12}x^4
\right).
\]
For $\gamma=0$, the imaginary perturbation contributes the quadratic term
$\alpha^2A''(z)x^2/4$ at the real critical value $2u=A(z)$.
\end{proposition}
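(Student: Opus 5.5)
Using the reduction of Lemma~\ref{lem:pq-factor}, $g(\kappa+\ell)-g(\kappa)=\frac{u}{n}\ell^2+\log\bigl|\pqbinom{n}{\kappa+\ell}{r}/\pqbinom{n}{\kappa}{r}\bigr|$, since $|p|=e^{-u/n}$. With $\ell=x_nn^{3/4}$ and $u=A(z)/2+\gamma/\sqrt n$, the prefactor contributes $\frac{A(z)}{2}x_n^2\sqrt n+\gamma x_n^2$. It therefore suffices to show
\[
\log\left|\frac{\pqbinom{n}{\kappa+\ell}{r}}{\pqbinom{n}{\kappa}{r}}\right|
=-\frac{A(z)}{2}x_n^2\sqrt n+\frac{\alpha^2}{4}A''(z)x_n^2-\frac{B(z)}{12}x_n^4+O(n^{-1/2}).
\]
By the symmetry $\pqbinom{n}{\kappa+\ell}{r}=\pqbinom{n}{\kappa-\ell}{r}$, the case $\kappa-\ell_n$ follows from the case $\kappa+\ell_n$, and we may assume $\ell\ge0$.

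I would use Lemma~\ref{lem:green} with $f_n(k)=\log|\pqbinom{n}{k}{r}|$, so that $\Delta^2f_n(k)=\Real\Lambda(n,k;r)$. The curvature is then computed with the hyperbolic form of Lemma~\ref{lem:R-hyperbolic}, following Corollary~\ref{cor:real-Lambda} but with the complex argument $t=(z+i\alpha n^{-1/4})/n$. Set $\zeta:=nt=z+i\epsilon$ with $\epsilon=\alpha n^{-1/4}$. Then
\[
\Lambda(n,\kappa+j;r)=t+\psi(a+\Delta)+\psi(a-\Delta)-\psi(a+h+\Delta)-\psi(a+h-\Delta),
\]
where $a=\zeta/4$, $h=\zeta/(2n)$ and $\Delta=\zeta j/(2n)$. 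Here $\psi=\log\sinh$ is taken on a neighbourhood of $[z/8,z]$ in the right half-plane, where $\sinh$ has no zeros and a branch exists. The exponential identity of Proposition~\ref{prop:Lambda} shows that any branch choice changes $\Lambda$ only by $2\pi i\mathbb Z$, which is invisible after taking real parts. For small perturbations the identity holds exactly by continuity from the real case.

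Repeating the Taylor expansion of Corollary~\ref{cor:real-Lambda} in $h$ and $\Delta$, now with analytic uniform bounds on a compact complex neighbourhood, gives
\[
\Lambda(n,\kappa+j;r)=-\frac{A(\zeta)}{n}-\frac{B(\zeta)}{n^3}j^2+O(n^{-2}+j^4n^{-4}).
\]
Here $A$ and $B$ are extended holomorphically by the same formulas, so $A(\zeta)=\zeta(\coth(\zeta/4)-1)$ with $B$ the corresponding holomorphic continuation. Taking real parts with $\Real A(z+i\epsilon)=A(z)-\frac{\epsilon^2}{2}A''(z)+O(\epsilon^4)$ and $\Real B(\zeta)=B(z)+O(\epsilon^2)$ gives
\[
\Delta^2f_n(\kappa+j)=-\frac{A(z)}{n}+\frac{\alpha^2A''(z)}{2n^{3/2}}-\frac{B(z)}{n^3}j^2+O\!\left(n^{-2}+j^2n^{-7/2}+j^4n^{-4}\right).
\]

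Summing with the Green weights, using $\sum(\ell-|j|)=\ell^2$ and $\sum(\ell-|j|)j^2=\ell^2(\ell^2-1)/6$, gives
\[
f_n(\kappa+\ell)-f_n(\kappa)=-\frac{A(z)\ell^2}{2n}+\frac{\alpha^2A''(z)\ell^2}{4n^{3/2}}-\frac{B(z)\ell^4}{12n^3}+\text{error}.
\]
With $\ell=x_nn^{3/4}$ these three terms are $-\frac{A(z)}{2}x_n^2\sqrt n$, $\frac{\alpha^2}{4}A''(z)x_n^2$ and $-\frac{B(z)}{12}x_n^4$. The error terms contribute $\ell^2n^{-2}=O(n^{-1/2})$, $\ell^4n^{-7/2}=O(n^{-1/2})$, $\ell^6n^{-4}=O(n^{1/2})$, and $\ell^2n^{-3}$ from the $-1$ in $\ell^2(\ell^2-1)$.

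The $\ell^6n^{-4}$ term is the main obstacle: the crude $\Delta^4$ remainder is too large. I would fix it by keeping the next even Taylor term. The $\Delta^4$ coefficient is $-h\psi^{(5)}(a)/12$, which contributes $O(j^4n^{-5})$ to the curvature and hence $O(\ell^6n^{-5})=O(n^{-1/2})$ after summation. The genuine remainder is then $O(h\Delta^6)=O(j^6n^{-7})$, which sums to $O(\ell^8n^{-7})=O(n^{-1})$. Similarly, the $h^2$ term in the expansion in $h$ must be tracked; it has the form $-\frac{h^2}{2}\bigl(\psi''(a+\Delta)+\psi''(a-\Delta)\bigr)$. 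Its $j$-independent part is $O(n^{-2})$, and its $j^2$ part is $O(j^2n^{-4})$, so both are harmless after summation. With these refinements every error is $O(n^{-1/2})$. Exponentiating, together with uniformity from the compactness of the parameter ranges, gives the proposition; the limit statement then follows by continuity.
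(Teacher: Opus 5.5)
Your proposal is correct and follows the paper's proof essentially step for step. The shared steps are: the hyperbolic form of $R$ with $nt=z+i\alpha n^{-1/4}$; Taylor expansion in $h$ and then in $\Delta$, giving the holomorphic continuations of $A$ and $B$; the real-part expansion $A(z)-\tfrac{\epsilon^2}{2}A''(z)+O(\epsilon^4)$; the Green identity; and the prefactor $e^{u\ell^2/n}$.

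The ``main obstacle'' you identify comes from a bookkeeping slip: the $\Delta$-remainder carries the factor $h$. It is therefore $h\,O(\Delta^4)=O(j^4n^{-5})=O(n^{-2})$ uniformly for $|j|\le Ln^{3/4}$, which is exactly the bound the paper uses, so the extra Taylor terms you keep are harmless but unnecessary.
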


\begin{proof}
Let
\[
\tau:=nt=z+i\alpha n^{-1/4}.
\]
For $\tau$ in a neighbourhood of $z$ define
\[
\mathcal A(\tau):=\tau\left(\coth\frac{\tau}{4}-1\right),
\qquad
\mathcal B(\tau):=\frac{\tau^3\sinh(\tau/2)}{8\sinh^4(\tau/4)}.
\]
We first prove the local curvature estimate
\begin{equation}\label{eq:small-complex-logR}
\log\left|R(n,\kappa+j;e^{-t})\right|
=
-\frac{\Real\mathcal A(\tau)}{n}
-\frac{\Real\mathcal B(\tau)}{n^3}j^2
+O_{z,\alpha,L}(n^{-2})
\end{equation}
uniformly for $|j|\le L n^{3/4}$.
For all sufficiently large $n$, the points used below lie in a compact subset
of the domain on which one branch of $\log\sinh$ is analytic. Put
\[
a:=\frac{\tau}{4},
\qquad
h:=\frac{\tau}{2n},
\qquad
\Delta:=\frac{\tau j}{2n},
\]
and write $\psi(y):=\log\sinh y$ on this compact set. Lemma~\ref{lem:R-hyperbolic}
gives
\[
\log\left|R(n,\kappa+j;e^{-t})\right|
=
\Real\left(
\frac{\tau}{n}
+\psi(a+\Delta)+\psi(a-\Delta)
-\psi(a+h+\Delta)-\psi(a+h-\Delta)
\right).
\]
Taylor's formula in $h$ and then in $\Delta$ gives, uniformly in the
stated range of $j$,
\[
\log|R|
=
\Real\left(
\frac{\tau}{n}
-h\left(\psi'(a+\Delta)+\psi'(a-\Delta)\right)
\right)
+O_{z,\alpha,L}(n^{-2})
\]
and
\[
\psi'(a+\Delta)+\psi'(a-\Delta)
=
2\psi'(a)+\psi^{(3)}(a)\Delta^2+O_{z,\alpha,L}(\Delta^4).
\]
Since $h\Delta^4=O_{z,\alpha,L}(n^{-2})$, and since
$\psi'(a)=\coth a$ and
\[
\psi^{(3)}(a)=\frac{\sinh(2a)}{\sinh^4 a},
\]
this proves \eqref{eq:small-complex-logR}.

Let
\[
g_n(k):=\log\left|\pqbinom{n}{k}{p,q}\right|.
\]
By Corollary~\ref{cor:local-ratios},
\[
\Delta^2g_n(\kappa+j)
=
\log\left|R(n,\kappa+j;e^{-t})\right|+\frac{2u}{n}.
\]
Together with \eqref{eq:small-complex-logR}, this gives
\[
\Delta^2g_n(\kappa+j)
=
\frac{2u-\Real\mathcal A(\tau)}{n}
-\frac{\Real\mathcal B(\tau)}{n^3}j^2
+O_{z,\alpha,L}(n^{-2}).
\]
Applying Lemma~\ref{lem:green} gives, uniformly for $|\ell|\le L n^{3/4}$,
\begin{align*}
g_n(\kappa+\ell)-g_n(\kappa)
&=
\frac{2u-\Real\mathcal A(\tau)}{2n}\ell^2
-\frac{\Real\mathcal B(\tau)}{12n^3}\ell^2(\ell^2-1)\\
&\quad
+O_{z,\alpha,L}\left(\frac{\ell^2}{n^2}\right).
\end{align*}
Now $\mathcal A$ and $\mathcal B$ are analytic near $z$ and real on the real
axis. Hence
\[
\Real\mathcal A(z+i\alpha n^{-1/4})
=
A(z)-\frac{\alpha^2}{2\sqrt n}A''(z)+O_{z,\alpha}(n^{-1}),
\]
and
\[
\Real\mathcal B(z+i\alpha n^{-1/4})=B(z)+O_{z,\alpha}(n^{-1/2}).
\]
Since $u=A(z)/2+\gamma n^{-1/2}$ and $\ell=x_n n^{3/4}$, we obtain
\[
g_n(\kappa+\ell)-g_n(\kappa)
=
\left(\gamma+\frac{\alpha^2}{4}A''(z)\right)x_n^2
-\frac{B(z)}{12}x_n^4
+O_{z,\alpha,\gamma,L}(n^{-1/2}).
\]
This proves \eqref{eq:small-imaginary-perturbation}. The formula with
$\kappa-\ell_n$ follows from the coefficient symmetry.
\end{proof}

\subsection*{Proof of Theorems A and B}

\begin{proof}
The one-parameter formula \eqref{eq:main-sqrt-r} is
Theorem~\ref{thm:complex-sqrt}, with $z=v$ and $\alpha=\delta$.
By Lemma~\ref{lem:pq-factor},
\[
\log\left|
\frac{\pqbinom{n}{\kappa+\ell}{p,q}}
     {\pqbinom{n}{\kappa}{p,q}}
\right|
=
\log\left|
\frac{\pqbinom{n}{\kappa+\ell}{r}}{\pqbinom{n}{\kappa}{r}}
\right|
+\frac{u}{n}\ell^2.
\]
Since $x=\ell/\sqrt n$, this identity and
\eqref{eq:main-sqrt-r} give \eqref{eq:main-sqrt-pq}. The identity
$C_n(v,0)=A(v)/2$ follows from
\[
\frac{v}{e^{v/2}-1}
=
\frac{v}{2}\left(\coth\frac{v}{4}-1\right).
\]
Finally, \eqref{eq:main-Cn-explicit} follows by multiplying the numerator
and denominator in \eqref{eq:main-Cn} by
\[
e^{v/2-i\delta n^{1/4}/2}-1
\]
and taking real parts. This proves Theorem A.

Theorem B is Proposition~\ref{prop:small-imaginary-perturbation}; equation
\eqref{eq:intro-small-imaginary-perturbation} is the same as
\eqref{eq:small-imaginary-perturbation}.
\end{proof}

\section{Conclusion and future work}\label{sec:conclusion}

We have proved local complex estimates for near-central
$(p,q)$-binomial coefficients. The exact factorisation separates the explicit
quadratic factor from the one-parameter coefficient array. The second-order ratio
has an absolutely convergent logarithmic representation that does not require
choosing a logarithm branch, and the curvature estimates give centred complex
ratios. The imaginary perturbation of order $n^{-5/4}$ shows that a term which
is absent from $\ell\Imag t$ at leading order can nevertheless change the
quadratic coefficient at the real critical value.

One continuation is to obtain summation estimates for generating polynomials
without assuming positivity. In the positive real case, such arguments rely on
a uniform tail estimate. For complex coefficients, one also has to control
cancellation away from the centre.

A separate problem is to locate maxima of the coefficient moduli. The exact
adjacent ratio is
\[
\frac{\left|\pqbinom{n}{k+1}{p,q}\right|}
     {\left|\pqbinom{n}{k}{p,q}\right|}
=
|p|^{\,n-2k-1}
\frac{|1-r^{n-k}|}{|1-r^{k+1}|},
\qquad r=\frac qp.
\]
A maximum occurs where this ratio crosses $1$. Deriving an asymptotic formula
for that crossing when the maxima move by order $n^{3/4}$ requires a separate
analysis; it does not follow from the bounded-$x$ expansion in Theorem A.

A further problem is the full two-exponent theory for
\[
t=\frac{v_0}{n^b}+i\frac{\delta_0}{n^a}.
\]
The exact centred identities remain valid when $\Real t>0$, but the limiting
functions and the appropriate coefficient normalisations depend on $a$ and
$b$. A complete classification would have to include $b=1$, where $n t$ has
a finite non-zero limit; $b>1$, where the radial part is closer to the ordinary
binomial case; $b<1$, where $n t$ diverges; and the oscillatory behaviour of
the arguments when $a<3/4$ under the quartic normalisation.

\section*{Acknowledgments}
The first-named author was partially funded by the Swedish Research
Council under grant agreement no.~2025-05053 and by the Swedish Energy Agency
under project no.~P2025-04323. The second-named author was partially
funded by the National Science Centre, Poland, under the Weave-UNISONO
programme, grant no.~UMO-2025/07/Y/ST1/00146.


\begin{thebibliography}{99}
\bibitem{AhagCzyzLundow2024}
P. {\AA}hag, R. Czy{\.z}, and P. H. Lundow,
``On a generalised Lambert $W$ branch transition function arising from
$p,q$-binomial coefficients,''
\emph{Appl. Math. Comput.} \textbf{462} (2024), 128347;
\url{https://doi.org/10.1016/j.amc.2023.128347}.

\bibitem{AhagCzyzLundow2025}
P. {\AA}hag, R. Czy{\.z}, and P. H. Lundow,
``Complex branches of a generalized Lambert $W$ function arising from
$p,q$-binomial coefficients,''
\emph{Ann. Polon. Math.} \textbf{134} (2025), no.~1, 1--18;
\url{https://doi.org/10.4064/ap240830-1-7}.


\bibitem{AtakishiyevNagiyev1994}
N. M. Atakishiyev and Sh. M. Nagiyev,
``On the Rogers--Szeg\H{o} polynomials,''
\emph{J. Phys. A: Math. Gen.} \textbf{27} (1994), no.~17, L611--L615;
\url{https://doi.org/10.1088/0305-4470/27/17/003}.

\bibitem{Corcino2008}
R. B. Corcino,
``On $p,q$-binomial coefficients,''
\emph{Integers} \textbf{8} (2008), A29, 16 pp.

\bibitem{Dhand2014}
V. Dhand,
``A combinatorial proof of strict unimodality for $q$-binomial coefficients,''
\emph{Discrete Math.} \textbf{335} (2014), 20--24;
\url{https://doi.org/10.1016/j.disc.2014.07.001}.

\bibitem{GasperRahman2004}
G. Gasper and M. Rahman,
\emph{Basic Hypergeometric Series},
2nd ed., Encyclopedia of Mathematics and its Applications, vol.~96,
Cambridge University Press, Cambridge, 2004;
\url{https://doi.org/10.1017/CBO9780511526251}.

\bibitem{JagannathanSridhar2010}
R. Jagannathan and R. Sridhar,
``$(p,q)$-Rogers--Szeg\H{o} polynomial and the $(p,q)$-oscillator,''
in \emph{The Legacy of Alladi Ramakrishnan in the Mathematical Sciences},
K. Alladi, J. R. Klauder, and C. R. Rao (eds.),
Springer, New York, 2010, 491--501;
\url{https://doi.org/10.1007/978-1-4419-6263-8_29}.

\bibitem{LubinskySaff1987}
D. S. Lubinsky and E. B. Saff,
``Convergence of Pad\'e approximants of partial theta functions and the
Rogers--Szeg\H{o} polynomials,''
\emph{Constr. Approx.} \textbf{3} (1987), 331--361;
\url{https://doi.org/10.1007/BF01890574}.

\bibitem{LundowRosengren2010}
P. H. Lundow and A. Rosengren,
``On the $p,q$-binomial distribution and the Ising model,''
\emph{Philos. Mag.} \textbf{90} (2010), no.~24, 3313--3353;
\url{https://doi.org/10.1080/14786435.2010.484406}.

\bibitem{LundowRosengren2013}
P. H. Lundow and A. Rosengren,
``The $p,q$-binomial distribution applied to the 5d Ising model,''
\emph{Philos. Mag.} \textbf{93} (2013), no.~14, 1755--1770;
\url{https://doi.org/10.1080/14786435.2012.750770}.

\bibitem{DLMF17}
NIST Digital Library of Mathematical Functions,
\emph{\S17.2(ii) $q$-Binomial Coefficients},
National Institute of Standards and Technology,
\url{https://dlmf.nist.gov/17.2#ii}.

\bibitem{OHara1990}
K. M. O'Hara,
``Unimodality of Gaussian coefficients: a constructive proof,''
\emph{J. Combin. Theory Ser. A} \textbf{53} (1990), no.~1, 29--52;
\url{https://doi.org/10.1016/0097-3165(90)90018-R}.

\bibitem{PakPanova2013}
I. Pak and G. Panova,
``Strict unimodality of $q$-binomial coefficients,''
\emph{C. R. Math. Acad. Sci. Paris} \textbf{351} (2013), no.~11--12, 415--418;
\url{https://doi.org/10.1016/j.crma.2013.06.008}.

\bibitem{StanleyEC1}
R. P. Stanley,
\emph{Enumerative Combinatorics. Vol.~1},
2nd ed., Cambridge Studies in Advanced Mathematics, vol.~49,
Cambridge University Press, Cambridge, 2012;
\url{https://doi.org/10.1017/CBO9781139058520}.

\bibitem{SuWang2012}
X.-T. Su and Y. Wang,
``Proof of a conjecture of Lundow and Rosengren on the bimodality of
$p,q$-binomial coefficients,''
\emph{J. Math. Anal. Appl.} \textbf{391} (2012), no.~2, 653--656;
\url{https://doi.org/10.1016/j.jmaa.2012.02.049}.

\bibitem{Szego1926}
G. Szeg\H{o},
``Ein Beitrag zur Theorie der Thetafunktionen,''
\emph{Sitzungsberichte der Preussischen Akademie der Wissenschaften,
Physikalisch-Mathematische Klasse} (1926), 242--252.

\end{thebibliography}
\end{document}